\definecolor{red}{rgb}{1,0,0}
\definecolor{blue}{rgb}{.2,.2,.8}
\def\S{\mathcal S}
\def\N{\mathbb N}
\def\res{\mathcal{R}}
\def\val{\textrm{val}}
\newtheorem{theorem}{Theorem}[section]
\newtheorem{corollary}[theorem]{Corollary}
\newtheorem{proposition}[theorem]{Proposition}
\newtheorem{conjecture}{Conjecture}
\newtheorem{lemma}[theorem]{Lemma}
\theoremstyle{definition}
\newtheorem{definition}{Definition}
\newtheorem{example}{Example}
\newtheorem{remark}{Remark}
\begin{document}
\allowdisplaybreaks

	\title{Parity of $3$-regular partition numbers and Diophantine equations}
\author{Cristina Ballantine}	
	\address{Department of Mathematics and Computer Science\\ College of The Holy Cross\\
	Worcester, MA 01610, USA }
	\email{cballant@holycross.edu}
	\author{Mircea Merca}
	\address{Department of Mathematical Methods and Models\\
	Fundamental Sciences Applied in Engineering Research Center\\  University Politehnica of Bucharest\\
RO-060042 Bucharest, Romania}
	\email{mircea.merca@profinfo.edu.ro}
	\author{Cristian-Silviu Radu} 
	\thanks{Cristian-Silviu Radu was supported by grant SFB F50-06 of the FWF}
	\address{Research Institute for Symbolic Computation (RISC), Johannes Kepler University\\
	 4040 Linz, Austria}
	\email{sradu@risc.uni-linz.ac.at}

	\maketitle


\begin{abstract}
Let $b_3(n)$ be  the number of $3$-regular partitions of $n$.  Recently, W. J. Keith and F. Zanello discovered
infinite families of Ramanujan type congruences modulo $2$ for $b_3(2n)$ involving every prime $p$ with $p \equiv 13, 17, 19, 23 \pmod {24}$,  and  O. X. M. Yao provided new infinite families of Ramanujan type congruences modulo $2$ for $b_3(2n)$  involving every prime $p\geqslant 5$.
In this paper, we introduce new infinite Ramanujan type congruences modulo $2$ for $b_3(2n)$. They complement naturally the results of Keith-Zanello and Yao and involve primes in  $\mathcal P=\{p \text{ prime }   :  \exists \,  j\in \{1,4,8\},\,  x, y \in \mathbb Z,\,  \gcd(x,y)=1  \text { with } x^2+216y^2=jp\}$ whose Dirichlet density is $1/6$.  As a key ingredient in our proof we show  that  of the number of primitive solutions for $x^2+216y^2=pm$, $p \in \mathcal P$, $p\nmid m$ and $pm\equiv 1\pmod{24}$, is divisible by $8$.  Here, the  difficulty  arises from the fact that $216$ is not idoneal.  We also give a conjectural exact formula for the number of solutions for this Diophantine equation. In the second part of the article,  we  study reversals of Euler-type identities. These are motivated by recent work of the second author on a reversal of Schur's identity which involves $3$-regular partitions weighted by the parity of their length. 

{\bf Keywords:} Diophantine equations, density of prime subsets, modular forms, partition congruences, regular partitions

{\bf MSC 2020:} 11D09, 11D45, 11P83, 05A17, 11F33 
\end{abstract}

\section{Introduction}

A partition of a positive integer $n$ is a weakly decreasing sequence of positive integers whose sum is $n$. The positive integers in the sequence are called parts. For more on the theory of partitions, we refer the reader to \cite{Andrews98}. 

For an integer $\ell>1$  a partition is called $\ell$-regular if none of its parts is divisible by $\ell$. The number of the $\ell$-regular partitions of $n$ is usually denoted by $b_\ell(n)$ and its  arithmetic properties were  investigated extensively. See, for example, \cite{Carlson, Cui, Dand, Furcy, Hirschhorn, Lovejoy, Penn, Penn8, Xia, Xia14, Webb}.   
In classical representation theory, when $\ell$ is prime, $\ell$-regular partitions of $n$ parameterize the irreducible $\ell$-modular representations of the symmetric group $\S_n$  \cite{James}. 

Recently, W.~J. Keith and F. Zanello \cite{Keith} studied the parity of the coefficients of
certain eta-quotients and investigated the parity of $b_\ell(n)$ when $\ell \leqslant 28$. 
In particular, they
proved the following parity result for $b_3(n)$ which  involves every prime $p$ satisfying $p \equiv 13, 17, 19, 23 \pmod {24}$.

\begin{theorem}[Keith-Zanello]\label{Th1}
	The sequence $b_3(2n)$ is lacunary modulo $2$. If $p \equiv 13, 17, 19, 23 \pmod {24}$ is prime, then
	$$b_3\big(2(p^2n+pk-24^{-1}) \big)\equiv 0 \pmod 2$$
	for $1 \leqslant k \leqslant p - 1$, where $24^{-1}$ is taken modulo $p^2$.
\end{theorem}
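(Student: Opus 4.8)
The plan is to reduce the congruence to a statement about Fourier coefficients of a Hecke eigenform modulo $2$ attached to the imaginary quadratic field $K=\mathbb{Q}(\sqrt{-6})$. First I would reduce the generating function modulo $2$: since $(q;q)_\infty^2\equiv(q^2;q^2)_\infty\pmod 2$, one has
\[
\sum_{n\ge 0}b_3(n)q^n=\frac{(q^3;q^3)_\infty}{(q;q)_\infty}\equiv\frac{(q;q)_\infty(q^3;q^3)_\infty}{(q^2;q^2)_\infty}\pmod 2,
\]
which up to the factor $q^{-1/12}$ is the eta-quotient $\eta(\tau)\eta(3\tau)/\eta(2\tau)$. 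Because $\eta(3\tau)/\eta(\tau)=\sum_n b_3(n)q^{(12n+1)/12}$, the natural homogenized object carrying the even subsequence is $\mathcal B(\tau):=\sum_{M\ge 0}b_3(2M)\,q^{24M+1}$, whose exponents lie in the class $1\bmod 24$ (the odd-index terms $b_3(2M+1)$ occupy the class $13\bmod 24$ and would be separated off by a sieve modulo $24$). The first goal is to show that $\mathcal B$, reduced modulo $2$, is the reduction of a Hecke eigenform on $\Gamma_0(N_0)$ (with $N_0$ a divisor of a power of $24$) whose coefficients $a_N$ are multiplicative and satisfy $b_3(2M)\equiv a_{24M+1}\pmod 2$, and whose Hecke eigenvalues are governed by splitting in $K$.

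Granting such an eigenform, the congruence classes fall out from the arithmetic of $K$. For a prime $p\ge 5$ one has $\left(\frac{-6}{p}\right)=\left(\frac{-1}{p}\right)\left(\frac{2}{p}\right)\left(\frac{3}{p}\right)$, and a direct evaluation of these Legendre symbols shows $\left(\frac{-6}{p}\right)=-1$ precisely when $p\equiv 13,17,19,23\pmod{24}$; these are exactly the primes inert in $K$, equivalently the primes represented by neither $x^2+6y^2$ nor $2x^2+3y^2$. For such $p$ the plan is to show the eigenvalue is even, $a_p\equiv 0\pmod 2$. The congruence then follows from the Hecke recursion: writing $N=24M+1$, if $v_p(N)=1$ then $N=pN'$ with $p\nmid N'$, so the relation $a_{pN'}=a_p\,a_{N'}-\chi(p)p^{k-1}a_{N'/p}$ collapses to $a_N=a_p\,a_{N'}$, whence $a_N\equiv 0\pmod 2$. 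It remains only to match the arithmetic progression: with $24^{-1}$ taken modulo $p^2$ and $t=(24\cdot 24^{-1}-1)/p^2\in\mathbb{Z}$, one computes
\[
24\bigl(p^2n+pk-24^{-1}\bigr)+1=p\bigl(24pn+24k-pt\bigr),
\]
so $v_p(N)=1$ holds exactly when $p\nmid k$, i.e. for $1\le k\le p-1$. Thus $b_3\bigl(2(p^2n+pk-24^{-1})\bigr)\equiv a_N\equiv 0\pmod 2$.

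Finally, lacunarity follows from the same structure together with a theorem of Serre: an eigenform modulo $2$ for which $a_p\equiv 0$ on a set of primes of positive density (here the density-$\tfrac12$ set $p\equiv 13,17,19,23\pmod{24}$) has Fourier coefficients supported on a set of density zero, so $b_3(2n)$ is lacunary modulo $2$. I expect the main obstacle to be the first step—constructing the precise modular form, proving it is a Hecke eigenform modulo $2$, and verifying the normalization $b_3(2M)\equiv a_{24M+1}\pmod 2$. The delicate point is that the naive CM theta series built from representation numbers of $x^2+6y^2$ does not match the parities of $b_3(2M)$ directly (for instance at $N=217=7\cdot 31$), so the identification must be carried out at the level of the Hecke-stable space of forms modulo $2$ and its eigenvalues rather than through a single closed-form theta expansion; it is here, not in the subsequent Hecke bookkeeping, that the real work lies.
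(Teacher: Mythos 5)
Your reduction modulo $2$ to the eta-quotient $\eta(\tau)\eta(3\tau)/\eta(2\tau)$, your identification of $p\equiv 13,17,19,23\pmod{24}$ as exactly the primes with $\left(\frac{-6}{p}\right)=-1$, and the bookkeeping showing that $N=24(p^2n+pk-24^{-1})+1$ has $v_p(N)=1$ precisely when $p\nmid k$ are all correct. The gap is the step you yourself flag as ``where the real work lies'': you never establish that $\mathcal B(\tau)=\sum_M b_3(2M)\,q^{24M+1}$ reduces modulo $2$ to a normalized Hecke eigenform with multiplicative coefficients, and everything after ``granting such an eigenform'' is conditional on that claim. Moreover, in the literal form you state it the claim cannot hold: as your own test value $N=217=7\cdot 31$ indicates, the parities of $b_3(2M)$ are not given by any single multiplicative system compatible with the Eisenstein series of $\chi_{-24}$ (one has $b_3(18)\equiv 1$ while $(1+\chi(7))(1+\chi(31))\equiv 0\pmod 2$), and retreating to ``a Hecke-stable space modulo $2$'' does not by itself license the recursion $a_{pN'}=a_p a_{N'}$; for that you would need to prove that $T_p$ annihilates the specific form $\mathcal B$ modulo $2$ for every inert $p$, which you do not do. As written, the proposal is a strategy whose central lemma is missing.

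For comparison, the Keith--Zanello argument (the present paper only quotes the theorem, but Remark \ref{R1} and section \ref{2.2} describe the mechanism) avoids eigenforms entirely and closes exactly this gap. From the eta-quotient one gets $b_3(2n)\equiv a(n)\pmod 2$, where $a(n)$ counts pairs $(x,y)\in\mathbb N^2$ with $x^2+24y^2=24n+1$ and $y=0$ or $3\nmid y$. Since $x^2+24y^2$ is alone in its genus on the residue class $1\bmod 24$ ($24$ is idoneal), the mass formula gives the \emph{total} representation number $M_1(N)=2\sum_{d\mid N}\left(\frac{-6}{d}\right)$, which vanishes as soon as some prime with $\left(\frac{-6}{p}\right)=-1$ divides $N$ to an odd power. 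On your progression $v_p(N)=1$, so $24n+1$ has no representations at all by $x^2+24y^2$, and the restricted count $a(n)$, being bounded by the total, vanishes; no multiplicativity of the restricted count is ever invoked. Lacunarity follows because the support of $b_3(2n)\bmod 2$ is contained in the set of $n$ with $24n+1$ represented by $x^2+24y^2$, a set of density zero by the Landau-type theorem for binary quadratic forms. If you wish to salvage your framework, the statement to aim for is not ``$\mathcal B$ is an eigenform'' but ``$T_p\mathcal B\equiv 0\pmod 2$ for every inert $p$,'' and the genus-theoretic vanishing above is precisely the proof of that statement.
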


Motivated by the Keith-Zanello result, O. X. M. Yao \cite{Yao} proved new infinite families of congruences modulo $2$ for $b_3(2n)$.  These new congruences involve any prime $p\geqslant 5$. 

\begin{theorem}[Yao]\label{Th2}
	Let $p\geqslant 5$ be a prime. 
	\begin{enumerate}
		\item [(1)] If $b_3\left( \frac{p^2-1}{12} \right) \equiv 1 \pmod 2$, then for $n,k\geqslant 0$
		\item[] $$b_3\left(2p^{4k+4}\,n+2p^{4k+3}\,j+\frac{p^{4k+4}-1}{12} \right) \equiv 0 \pmod 2$$
		where $1\leqslant j \leqslant p-1$ and for $n,k\geqslant 0$
		\item[] $$b_3\left( \frac{p^{4k}-1}{12} \right) \equiv 1 \pmod 2.$$
		\item [(2)] If $b_3\left( \frac{p^2-1}{12} \right) \equiv 0 \pmod 2$, then for $n,k\geqslant 0$ with $p \nmid (24n+1)$
		\item[] $$b_3\left(2p^{6k+2}\,n+\frac{p^{6k+2}-1}{12} \right) \equiv 0 \pmod 2$$
		and for $n,k\geqslant 0$
		\item[] $$b_3\left( \frac{p^{6k}-1}{12} \right) \equiv 1 \pmod 2.$$
	\end{enumerate}
\end{theorem}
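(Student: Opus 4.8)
The plan is to pass to residues modulo $2$, collapse the generating function to a single weight-one theta series, and then extract the congruences from the multiplicative structure of its coefficients. Starting from $\sum_{n\geqslant0}b_3(n)q^n=\frac{(q^3;q^3)_\infty}{(q;q)_\infty}$, I would first clear the denominator modulo $2$ by combining Jacobi's identity $(q;q)_\infty^3\equiv\sum_{k\geqslant0}q^{k(k+1)/2}\pmod2$ with $(q;q)_\infty^2\equiv(q^2;q^2)_\infty\pmod2$, and then apply a $2$-dissection in order to isolate the even-index subsequence $b_3(2m)$. The target of this first stage is an identity of the form
\[
\sum_{m\geqslant0}b_3(2m)\,q^{24m+1}\ \equiv\ \Theta(q)\pmod2,
\]
where $\Theta=\sum_N a(N)q^N$ is congruent modulo $2$ to the weight-one theta series of the genus of $x^2+216y^2$ (the imaginary quadratic order of discriminant $-864$); this is the source of the form $x^2+216y^2$ in the statement. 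Here $a(N)$ is supported on $N\equiv1\pmod{24}$ --- automatic since $x^2+216y^2\equiv x^2\equiv1\pmod{24}$ when $\gcd(x,6)=1$ --- it records modulo $2$ the primitive representations of $N$ by the forms of this genus (equivalently the representations of $jN$ with $j\in\{1,4,8\}$, as in $\mathcal P$), and it is normalized so that $a(1)=1$ and $a(24m+1)\equiv b_3(2m)\pmod2$.

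The heart of the argument is to show that, for every prime $p\geqslant5$ (so that $p\nmid864$), the coefficients obey the Hecke-type recurrence
\[
a(p^{\,j+1})\ \equiv\ a(p)\,a(p^{\,j})+a(p^{\,j-1})\pmod2,\qquad j\geqslant1,
\]
together with multiplicativity $a(MN)\equiv a(M)a(N)$ for $\gcd(M,N)=1$. For a weight-one eigenform this is the standard relation $a(p^{j+1})=a(p)a(p^j)-\chi(p)a(p^{j-1})$ read modulo $2$, the quadratic character $\chi$ taking values $\pm1\equiv1$ on primes $p\geqslant5$. Writing $c_j:=a(p^{\,j})\in\{0,1\}$ with $c_0=1$ and $c_1=a(p)$, the recurrence has only two behaviours modulo $2$: if $a(p)\equiv0$ then $c_{j+1}\equiv c_{j-1}$, so $(c_j)=1,0,1,0,\dots$ has period $2$ and $c_j=0$ for every odd $j$; if $a(p)\equiv1$ then $(c_j)=1,1,0,1,1,0,\dots$ has period $3$ and $c_j=0$ exactly when $j\equiv2\pmod3$. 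Since $a(p^2)\equiv a(p)+1\pmod2$, the two regimes are distinguished precisely by the parity of $a(p^2)=b_3\!\big((p^2-1)/12\big)$, which is the dichotomy appearing in the theorem.

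With this in hand the congruences follow by a short $p$-adic valuation count. In part (1), the argument $2m=2p^{4k+4}n+2p^{4k+3}j+(p^{4k+4}-1)/12$ gives $N=24m+1=p^{4k+3}\big(p(24n+1)+24j\big)$, and since $p\nmid24j$ for $1\leqslant j\leqslant p-1$ we have $p^{4k+3}\parallel N$; hence by multiplicativity $a(N)\equiv c_{4k+3}\,a(N/p^{4k+3})\equiv0$, because $4k+3$ is odd and we are in the case $a(p)\equiv0$. The companion identity $b_3\big((p^{4k}-1)/12\big)\equiv1$ merely records $a(p^{4k})=c_{4k}=1$. In part (2) one finds $N=p^{6k+2}(24n+1)$ with $p\nmid(24n+1)$, so $p^{6k+2}\parallel N$ and $a(N)\equiv c_{6k+2}=0$ since $6k+2\equiv2\pmod3$ in the case $a(p)\equiv1$, while $b_3\big((p^{6k}-1)/12\big)\equiv1$ records $c_{6k}=1$.

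I expect the genuine obstacle to be the first stage, that is, proving the mod-$2$ theta identity and with it the recurrence. Because $216$ is not idoneal, the genus of discriminant $-864$ contains several classes, so the theta series is a priori a combination of Hecke eigenforms rather than a single eigenform, and one must verify that modulo $2$ the relevant combination still satisfies the clean recurrence displayed above --- equivalently, carry out the $p$-dissection of $\Theta$ directly and check the multiplicative collapse. Once that is secured, everything downstream is the elementary valuation bookkeeping sketched above.
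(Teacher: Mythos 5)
First, a point of orientation: the paper does not prove this statement --- it is quoted verbatim from Yao's article \cite{Yao} --- so there is no internal proof to measure yours against. Judged on its own terms, your proposal has the right skeleton, and the closing ``valuation bookkeeping'' is correct: the factorizations $N=p^{4k+3}\bigl(p(24n+1)+24j\bigr)$ and $N=p^{6k+2}(24n+1)$ are right, and the period-$2$/period-$3$ dichotomy you describe is exactly what governs the two cases of the theorem. The problem is that the entire mathematical content sits in the Hecke-type recurrence together with multiplicativity, and you assert these without proof while yourself flagging them as ``the genuine obstacle.'' That obstacle \emph{is} the theorem; what remains is routine. So as it stands this is a program, not a proof.

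Two concrete defects in the setup would also have to be repaired before the program could be executed. (i) The series you call $\Theta$ is not the genus theta series of $x^2+216y^2$ read mod $2$. From the reduction used in this paper, $b_3(2m)\equiv a(m)\pmod 2$ with
\begin{equation*}
a(m)=\tfrac14\bigl(M_1(24m+1)-M_2(24m+1)\bigr)+\bigl[\,24m+1\text{ is a perfect square}\,\bigr],
\end{equation*}
where $M_1,M_2$ count representations by $x^2+24y^2$ and $x^2+216y^2$ respectively. This is a \emph{difference} of two theta series divided by $4$, plus a weight-$1/2$ pentagonal-number term. The division by $4$ means one needs the representation numbers modulo $8$, not modulo $2$ (this is exactly why the paper must prove $8\mid N_2(pm)$ for its own Theorem \ref{Tmain}), and the square-indicator term is not cosmetic: it is what makes $a(p^{2j})$ odd in the period-$2$ regime, i.e.\ it produces the assertions $b_3\bigl((p^{4k}-1)/12\bigr)\equiv 1$. (ii) For $p\not\equiv 1\pmod{24}$ the literal coefficient of $q^{p}$ in any series supported on exponents $\equiv 1\pmod{24}$ is $0$, so $c_1$ cannot be that Fourier coefficient: for $p=5$ your recurrence with $a(5)=0$ would force $a(25)\equiv 1$, whereas $b_3(2)=2$. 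You must instead work with the extended multiplicative function defined through representations of $jp$, $j\in\{1,4,8\}$ (which you gesture at), and then the recurrence becomes a statement about the order of the class of the prime ideal above $p$ in the class group of discriminant $-864$: period $2$ when $p$ is inert or that class is $2$-torsion (the primes $p\equiv 13,17,19,23\pmod{24}$ together with $p\in\mathcal{P}$), period $3$ when the class has order divisible by $3$ (the split primes in $\mathcal{Q}$). Establishing this is precisely the composition-of-forms analysis that the present paper carries out in Lemma \ref{Lmain}, Corollary \ref{Cmain} and Proposition \ref{Pp^2}, and it is delicate exactly because $216$ is not idoneal. Until that step is supplied, the theorem is not proved.
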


In this paper, motivated by Theorems \ref{Th1} and \ref{Th2}, we provide new Ramanujan type congruences modulo $2$ for $b_3(2n)$. First we introduce some notation. Given the Diophantine equation $x^2+ny^2=m$, by a primitive  solution we mean a solution $(x,y)\in \mathbb Z^2$ satisfying $\gcd(x,y)=1$.  

For the remainder of the article $\mathcal P$ denotes  the set of primes $p$ such that for some $j \in \{1,4,8\}$ the  equation $x^2+24\cdot 9y^2=jp$ has primitive solutions.

\begin{theorem} \label{Tmain}
	For every $p\in \mathcal P$ and $n \geqslant 0$, we have $$b_3\big(2\,(p^2\,n+p\,\alpha-24_p^{-1})\big) \equiv 0 \pmod 2,$$
		where $0\leqslant \alpha < p$, $\alpha \neq \lfloor p/24 \rfloor$, $24_p^{-1}$ is the inverse of $24$ modulo $p$ taken such that $1\leqslant -24_p^{-1}\leqslant p-1$.  \end{theorem}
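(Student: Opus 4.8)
The plan is to pass to generating functions modulo $2$, reduce the congruence to a statement about representations of integers by the binary form $x^2+216y^2$, and then close the argument with the divisibility lemma advertised in the abstract. Writing $f_k=(q^k;q^k)_\infty$, we have $\sum_{n\ge0}b_3(n)q^n=f_3/f_1$, and since $f_1^2\equiv f_2\pmod2$ I would first record
$$\sum_{n\ge0}b_3(n)q^n\equiv\frac{f_1f_3}{f_2}\pmod2 .$$
Up to a fractional $q$-power this is $\eta(\tau)\eta(3\tau)/\eta(2\tau)$, whose associated lattice has discriminant $-24$; concretely one checks that, in the $24n+1$ normalization, the odd coefficients of this series are governed by the form $x^2+6y^2$. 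Extracting the even part and re-indexing by $m$ (the restriction $x^2+6y^2\equiv1\pmod{48}$ rescales $6\mapsto 216$, with $j\in\{1,4,8\}$ recording the admissible residue classes), I would aim for a congruence of the shape
$$b_3(2m)\equiv N(24m+1)\pmod2,$$
where $N(M)$ counts primitive representations $M=x^2+216y^2$ up to the obvious sign symmetries, suitably bookkept over $j\in\{1,4,8\}$. Getting the form to be $x^2+216y^2$ rather than merely $x^2+6y^2$, with the correct primitivity and $j$-weighting, is the first substantial task; it is exactly the $2$-dissection step in the spirit of Keith--Zanello.

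Next I would locate the progression inside this CM support. Fix $p\in\mathcal P$ and set $M=24m+1$ with $m=p^2n+p\alpha-24_p^{-1}$. Using $24\cdot 24_p^{-1}\equiv1\pmod p$ one gets $M=24p^2n+24p\alpha-24\cdot24_p^{-1}+1\equiv0\pmod p$, so $p\mid M$, while a short computation shows that $p^2\mid M$ occurs precisely for the single value $\alpha=\lfloor p/24\rfloor$; since that value is excluded, $M=pM'$ with $p\nmid M'$. By construction $M\equiv1\pmod{24}$, hence $pM'\equiv1\pmod{24}$. This is exactly the hypothesis of the key lemma, and I note that $M=pM'$ is not a perfect square, so the exceptional ``on-axis'' contributions (which correspond to squares and account for discrepancies such as $b_3(0)$) do not intervene here.

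Finally I would invoke the arithmetic input: the number of primitive solutions of $x^2+216y^2=pM'$ with $p\in\mathcal P$, $p\nmid M'$ and $pM'\equiv1\pmod{24}$ is divisible by $8$. After quotienting by the four sign symmetries $(x,y)\mapsto(\pm x,\pm y)$, divisibility by $8$ becomes an \emph{even} count of representations in a fundamental domain, i.e. $N(M)\equiv0\pmod2$; combined with the first step this yields $b_3(2m)\equiv0\pmod2$, the assertion of the theorem. The hard part will be the divisibility lemma itself, and it is hard exactly because $216$ is not idoneal: genus theory does not determine the representation numbers of $x^2+216y^2$ from congruence conditions alone. I expect to control the count instead through the splitting of $p$ in the relevant order — the condition $p\in\mathcal P$ being precisely what makes $p$, $4p$, or $8p$ primitively represented — pairing primitive representations into orbits of size $8$ via the sign symmetries together with the involution exchanging the two prime ideals above $p$; the non-idoneality is what prevents this pairing from being automatic and forces the finer analysis. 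Securing Step 1 in the precise shape $x^2+216y^2$ with the $j\in\{1,4,8\}$ weighting is the secondary difficulty.
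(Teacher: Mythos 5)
Your overall architecture does match the paper's: reduce $b_3(2m)\bmod 2$ to a representation count at $M=24m+1$; show $M=pm'$ with $p\nmid m'$, $M\equiv 1\pmod{24}$ and $M$ not a perfect square (your middle paragraph is exactly the paper's lemma that $p^2\nmid 24(p\alpha-24_p^{-1})+1$ unless $\alpha=\lfloor p/24\rfloor$); and finish with divisibility by $8$ of the primitive representation count of $pm'$ by $x^2+216y^2$. The paper proves that last step by essentially the mechanism you sketch: an explicit two-to-one map from primitive solutions of $x^2+216y^2=pm'$ onto primitive solutions of $a^2+216b^2=jm'$, obtained by composing with a fixed primitive representation of $jp$ (the two preimages corresponding to your two primes above $p$), combined with the four sign symmetries on the target.

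However, your Step 1 reduction is wrong as stated, and the error is not mere bookkeeping. What Keith--Zanello give is that $b_3(2m)\bmod 2$ counts representations $24m+1=x^2+24y^2$ with $y=0$ or $3\nmid y$; since $M=pm'$ is not a square this equals $\frac14\bigl(M_1(M)-M_2(M)\bigr)$, where $M_1$ counts all representations by $x^2+24y^2$ and $M_2$ those by $x^2+216y^2$ (the $3\mid y$ correction). The target is therefore $M_1(M)-M_2(M)\equiv 0\pmod 8$, a statement about a \emph{difference} of two representation numbers, not about $x^2+216y^2$ alone; you must also control $M_1(M)\bmod 8$, which the paper does via the mass formula for the idoneal discriminant $-96$. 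More seriously, your claimed conclusion ``$N(M)\equiv 0\pmod 2$'' is false precisely when $m'$ is a perfect square. Take $p\in\mathcal{P}$ with $p\equiv 1\pmod{24}$, $n=0$, $\alpha=0$: then $M=p$ and the number of primitive solutions of $x^2+216y^2=p$ is $4$, not divisible by $8$ (the paper's corollary requires $m'>1$). There $M_1(p)=M_2(p)=4$ and only the difference vanishes modulo $8$; your single-form version would output $b_3\bigl(\tfrac{p-1}{12}\bigr)\equiv 1$, the wrong parity. Closing the argument requires the difference structure, the decomposition $M_i(gh^2)=\sum_{d\mid h}N_i(gd^2)$ to pass from primitive to all representations, and a separate case for $m'$ a perfect square in which $M_i(pm')\equiv N_i(p)\equiv 4\pmod 8$ for both forms simultaneously. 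Your sketch of the key lemma is the right idea, but without these pieces the proof does not close.
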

		
\begin{remark} \label{R1} In the proof of Theorem \ref{Th1}, the authors make use of the fact that if $p$ is a prime congruent to $13,17, 19, 23$ modulo $24$, then the Diophantine equation $x^2+24y^2=pm$ has no solutions. In section \ref{2.2} we show that, if $p\in \mathcal P$, then \begin{align*}
j=1 & \implies p \equiv 1\pmod{24}\\
j=4 & \implies p \equiv 7\pmod{24}
\\ j=8 & \implies p \equiv 5,11 \pmod{24}.\end{align*} For $p\in \mathcal P$, the  equation $x^2+24y^2=pm$ may have solutions and we need to understand the behavior modulo $8$ of the number of solutions with $3\nmid y$.  Here, $pm=24((p^2\,n+p\,\alpha-24_p^{-1}))+1$. 
\end{remark}
\begin{remark}\label{R2}
It will become clear from the discussion in section \ref{2.2} that for  $p\in \mathcal P$ we have $b_3\left( \frac{p^2-1}{12} \right) \equiv 1 \pmod 2$. 
\end{remark}
Thus, Theorem \ref{Tmain} complements naturally   Theorems \ref{Th1} and \ref{Th2}. 
\smallskip

A few words  about the proof of Theorem \ref{Tmain} are in order. It follows from \cite{Keith} that $b_3(2n)\equiv a(n)\pmod 2$, where $a(n)$ is the number of representations of $n$ as the sum of a generalized pentagonal number and a square not divisible by $3$.  Thus, to prove Theorem \ref{Tmain} we have to investigate the number of solutions $(x, y)\in \mathbb N^2$ for \begin{equation}\label{eqn_conv}\ x^2+24y^2=pm\end{equation} with $y=0$ or $3\nmid y$, and $pm$ as in Remark \ref{R1}. 
This in turn, requires a study of  the number  of primitive solutions for $x^2+216y^2=pm$ which we denote by $N_2(pm)$.  

Regarding \eqref{eqn_conv}, we note that there are four positive, reduced, primitive quadratic forms of discriminant $-96$. We denote by $N(-96, w)$ the number of primitive representations of $w$ by all these quadratic forms, i.e., \begin{equation*}
\begin{split}
N(-96,w):=&|\{(x,y)\in\mathbb{Z}:x^2+24y^2=w,\, \gcd(x,y)=1\}|\\
+&|\{(x,y)\in\mathbb{Z}:3x^2+8y^2=w,\, \gcd(x,y)=1\}|\\
+&|\{(x,y)\in\mathbb{Z}:5x^2+2xy+5y^2=w,\, \gcd(x,y)=1\}|\\
+&|\{(x,y)\in\mathbb{Z}:4x^2+4xy+7y^2=w,\, \gcd(x,y)=1\}|.
\end{split}
\end{equation*}
Then the mass formula (see, for example \cite[Lemma 3.25]{Cox}) gives \begin{equation*} N(-96,w)=2\prod_{p\mid w}\left(1+\Big(\frac{-6}{p}\Big)\right),\end{equation*} where the product is over all prime divisors of $w$ and $\big(\frac{-6}{p}\big)$ is the Jacobi symbol.

However,  since $\alpha^2\equiv 1\pmod{24}$ when $\gcd(\alpha,6)=1$, we see that, when $w\equiv 1\pmod{24}$, there are no integers $x,y$ such that $3x^2+8y^2=w$, $5x^2+2xy+5y^2=w$ or $4x^2+4xy+7y^2=w$. 
Therefore, if $w\equiv 1\pmod{24}$,
$$N(-96,w)=|\{(x,y)\in\mathbb{Z}:x^2+24y^2=w,\gcd(x,y)=1\}|.$$
In fact, as discussed in \cite[pg. 84]{D}, a positive  integer $w\equiv 1, 5, 7, 11 \pmod{24}$  can be represented by one and only one of the   four  quadratic forms above. In the language of genera, this means that the four quadratic forms lie in four different different genera.

In general, the number of positive, reduced, primitive quadratic forms  of a certain disciminant $-d$ is equal to the class number $h(-d)$. The quadratic form $x^2+216y^2$ has discriminant $-864$ and there are twelve positive, reduced, primitive quadratic forms with this discriminant.
The  total number of solutions $N(-864,w)$ for all these quadratic forms can again be  calculated  using \cite[Lemma 3.25]{Cox}. However, these quadratic forms do not lie in twelve different genera, i.e.,  we can not separate them  according to the congruence classes for $w$, regardless of the choice of modulus.  A number $n$ such that the different positive, reduced, primitive quadratic forms with  discriminant $-4n$ lie in different genera, is an idoneal number or Euler convenient number.  Thus, $216$ is not idoneal.

  Even though we can not obtain an explicit formula form $N_2(pm)$,  in section \ref{2.2} we are able to prove that, for $p\in\mathcal P$, $m\geq 1$ with $pm\equiv 1\pmod{24}$ and $p\nmid m$, 
$$N_2(pm)= 2\cdot |\{(a,b)\in\mathbb{Z}:a^2+216b^2=jm,\, \gcd(a,b)=1\}|,$$ where $j\in\{1, 4, 8\}$ is such that $x^2+216 y^2=jp$ has primitive solutions.
 This relation is enough to prove Theorem \ref{Tmain}. In section \ref{S4} we give a simple conjectured formula for $N_2(w)$.

The article is organized as follows. In section \ref{S2}, we prove Theorem \ref{Tmain}. In subsection \ref{2.1}, we prove the theorem for  small values of $p\in \mathcal P$ using  modular forms and   \cite[Lemma 4.5]{Radu1a}. We introduce all necessary notation and discuss several hypothesis of the lemma for general primes $p\geqslant 5$. Then, we show that the remaining hypotheses  hold for  $p\in \mathcal P$.  Lemma 4.5 in \cite{Radu1a}  produces a bound and reduces the proofs  to checking that the congruences hold for integers up to the given  bound. We   performed the required check.  However, as $p$ grows, it becomes impossible  to check the required finite but large number of congruences. In subsection \ref{2.2}, we give a proof of Theorem \ref{Tmain} (for all $p\in \mathcal P$) that uses only facts about quadratic forms and determine the Dirichlet density of the set $\mathcal P$. 
The results of section \ref{SL} are motivated by recent work of the second author \cite{M22}.  Recall that Schur's identity states that the number of partitions of $n$ into parts congruent to $\pm 1$ modulo $6$ equals the number of $3$-regular distinct partitions of $n$.   
We generalize the Schur reversal result  of \cite{M22} to all Euler-type identities, i.e., the number of partitions of $n$ with parts parts from a set $S_2$ is equal to the number of distinct partition of $n$ with parts from a set $S_1$. We prove analytically and combinatorially that, if $S_1, S_2$ are subsets of integers for which an Euler-type identity holds, then the number of distinct partitions with parts from $S_2$ weighted by the parity of the number of parts equals the number of  partitions with parts from $S_1$ weighted by the parity of the number of parts. In section \ref{S4}, we conjecture and exact formula for the number of primitive solutions for $x^2+216y^2=m$ when $m\equiv 1 \pmod{24}$. We end the section with companion conjectures for the parity of the number of $3$-regular partitions of $n$ with an even (respectively odd) number of parts.

\section{Proof of Theorem \ref{Tmain} } \label{S2}

We discovered Theorem \ref{Tmain} experimentally. Our initial guess was that the theorem holds for   \begin{align*}p\in \mathcal U:=\{29,59,79,103,223,& 227,241,251, 269, 293,  337, 419, 443, 487\}.\end{align*} For these values of $p$, the theorem can be proved using modular forms and \cite[Lemma 4.5]{Radu1a}. We present the proof here and follow with the proof for all $p \in \mathcal P$. The latter does not use modular forms.

First we introduce some necessary background on partitions. 

We denote by $p(n)$ the number of the partitions of $n$. For example, $p(5)=7$ since the  partitions of $5$ are 
\begin{equation*}
(5),\ (4,1),\ (3,2),\ (3,1,1),\ (2,2,1),\ (2,1,1,1),\ (1,1,1,1,1). 
\end{equation*}
 The generating function of $p(n)$ is given by
$$
\sum_{n=0}^\infty p(n)\, q^n = \frac{1}{(q;q)_\infty}.
$$
Here and throughout $q$ is a complex number with $|q| < 1$, and the symbol 
$(a;q)_\infty$ denotes the infinite product
$$(a;q)_\infty:= \prod_{n=0}^\infty (1-a q^n).$$ The generating function for the sequence $b_{\ell}(n)$ is given by
$$
\sum_{n=0}^\infty b_\ell(n) q^n= \frac{(q^\ell;q^\ell)_\infty}{(q;q)_\infty}.
$$

\subsection{Proof of Theorem \ref{Tmain} for $p\in \mathcal U$} \label{2.1}
Many proofs of congruences in the literature use \cite[Lemma 4.5]{Radu1a} and they introduce all necessary notation. The reader can consult, for example, \cite{RS1} and \cite{Chern}. For completeness and the convenience of the reader, we also define all needed notation. Let $\Gamma:=SL(2, \mathbb Z),$ and define  $$\Gamma_\infty:=\left\{\left(\begin{array}{cc}1& b \\ 0& 1\end{array}\right)\in \Gamma  \right\}.$$ For a positive integer $N$, we define the congruence subgroup $$\Gamma_0(N):=\left\{\left(\begin{array}{cc}a & b \\ c& d\end{array}\right)\in \Gamma : c\equiv 0 \pmod N \right\}.$$  If $M$ is a  positive integer, we write $R(M)$ for the set of finite integer sequences $r=(r_{\delta_1}, r_{\delta_2}, \ldots, r_{\delta_k})$, where $1={\delta_1}<{\delta_2}<\cdots <{\delta_k}=M$ are the positive divisors of $M$. For the remainder of the argument, we only consider positive divisors of a given integer.
Given a positive integer $m$, we denote by $S_{24m}$ the set of invertible quadratic residues modulo $24m$ and, for fixed $0\leqslant t\leqslant m-1$, we define
 $$P_{m,r}(t):=\left\{ts+\frac{s-1}{24}\sum_{\delta \mid M}\delta r_{\delta}\mod m :  s\in S_{24m}\right\},$$
 where by $x \mod m$ we mean the residue of $x$ modulo $m$. 
 
 Let $m,M$ and $N$ be positive integers.  Moreover, let $t$ be an integer such that $0\leqslant t\leqslant m-1$ and let $r=(r_\delta)\in R(M)$. We set $\kappa:=\gcd(1-m^2, 24)$ and write $\prod_{\delta\mid M}\delta^{|r_\delta|}=:2^s\ell$, where $s$ is a non-negative integer and $\ell$ is odd.  Then, we say that the tuple $(n,M,N,(r_\delta), t)\in \Delta^*$ if and only if all of the following six conditions are satisfied. 
 \begin{enumerate}
\item  $p\mid m$, $p$ prime, implies $p\mid N$; 
 
 \item  $\delta \mid M$, $\delta \geqslant 1$ such that $r_\delta\neq 0$ implies $\delta \mid mN$; 
 
 \item  $\kappa N \sum_{\delta \mid M}r_\delta \frac{mN}{\delta} \equiv 0 \pmod{24}$;
 
\item   $\kappa N \sum_{\delta \mid M}r_\delta \equiv 0 \pmod{8}$; 
 
\item  $\displaystyle  \frac{24m}{\gcd(\kappa(-24t-\sum_{\delta\mid M}\delta r_\delta),24m)}\Big| N$;

\item  If $2 \mid m$, then $(4\mid \kappa N \text{ and } 8\mid Ns)$ or $(2 \mid s \text{ and } 8 \mid N(1-j))$. 

\end{enumerate} 
  Finally, for $\gamma=\left(\begin{array}{cc}a & b \\ c& d\end{array}\right)\in \Gamma$, 
 we define $$p_{m,r}(\gamma):=\min_{d\in\{0, \ldots, m-1\}}\frac{1}{24}\sum_{\delta\mid M}r_\delta\frac{\gcd^2(\delta(a+\kappa dc),mc)}{\delta m}$$ and $$p^*_{r}(\gamma):=\frac{1}{24}\sum_{\delta\mid M}r_\delta\frac{\gcd^2(\delta,c)}{\delta}.$$
With all the notation in place, we can now state \cite[Lemma 4.5]{Radu1a} that will allow us to prove the congruences of Theorem \ref{Tmain} for  $p\in \mathcal U$. 

\begin{lemma} \label{radu} Let $u$ be a positive integer, $(m,M,N,t,r=(r_\delta))\in \Delta^*$, $a=(a_\delta)\in R(N)$.    Let $\{\gamma_1, \ldots, \gamma_n\}\subset \Gamma$ be a complete set of representatives of the double cosets in $\Gamma_0(N)\backslash \Gamma/\Gamma_\infty$. Assume that $p_{m,r}(\gamma_i)+p^*_s(\gamma_i)\geqslant 0$ for all $0\leqslant i\leqslant n$. Let $t_{\min}:=\min_{t'\in P_{m.r}(t)}t'$ and $$\nu:=\frac{1}{24}\left(\left(\sum_{\delta \mid N}a_\delta+ \sum_{\delta \mid M}r_\delta\right) [\Gamma:\Gamma_0(N)]-\sum_{\delta \mid N}\delta a_\delta\right)-\frac{1}{24m}\sum_{\delta \mid M}\delta r_\delta-\frac{t_{\min}}{m}.$$ Suppose $$\prod_{\delta\mid M}\prod_{n=1}^\infty (1-q^{\delta n})^{r_\delta}=\sum_{n=0}^\infty c_r(n)q^n.$$ If $$\sum_{n=0}^{\lfloor \nu\rfloor} c_r(mn+t')q^n\equiv 0 \pmod u, \ \ \text{ for all } t'\in P_{m,r}(t),$$ then $$\sum_{n=0}^{\infty} c_r(mn+t')q^n\equiv 0 \pmod u, \ \ \text{ for all } t'\in P_{m,r}(t).$$
\end{lemma}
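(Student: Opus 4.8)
The plan is to realize the generating series of each arithmetic subprogression as a component of a single holomorphic modular form on $\Gamma_0(N)$ and then to convert the valence (Sturm) bound for that form into the explicit coefficient bound $\nu$. First I would assemble the candidate modular object. Writing $q=e^{2\pi i\tau}$, the eta-quotient $\prod_{\delta\mid M}\eta(\delta\tau)^{r_\delta}$ equals $q^{(\sum_\delta\delta r_\delta)/24}\sum_{n\geq 0}c_r(n)q^n$, so its exponents lie in the class $\tfrac{1}{24}\sum_\delta\delta r_\delta$ modulo $1$. Extracting the terms with $n\equiv t'\pmod m$ is a roots-of-unity filter, i.e. a combination of the operator $U_m$ with the diamond/Galois twists; the group of invertible squares $S_{24m}$ permutes the admissible residues, which is exactly why the relevant orbit is $P_{m,r}(t)$ and why the hypothesis and conclusion range over all $t'\in P_{m,r}(t)$ simultaneously. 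Multiplying the extracted series by the auxiliary eta-quotient $\prod_{\delta\mid N}\eta(\delta\tau)^{a_\delta}$, $a\in R(N)$, and normalizing by a suitable power of $q$ so that the leading exponent becomes a nonnegative integer, I obtain a series $\widetilde g(\tau)$ of weight $k=\tfrac12\big(\sum_\delta a_\delta+\sum_\delta r_\delta\big)$.

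Next I would verify that $\widetilde g$ is genuinely a holomorphic modular form on $\Gamma_0(N)$. Modularity is precisely what the six defining conditions of $\Delta^*$ guarantee: conditions (1)--(4) are the Newman--Ligozat divisibility and congruence constraints ensuring that $\prod_\delta\eta(\delta\tau)^{r_\delta}$, and after the $U_m$-twist the extracted series times $\prod_\delta\eta(\delta\tau)^{a_\delta}$, transforms with trivial multiplier under $\Gamma_0(N)$; condition (5) fixes the level so that the subprogression extraction descends to $\Gamma_0(N)$; and condition (6) handles the delicate $2$-adic part of the eta multiplier system. Holomorphicity at the cusps is where $p_{m,r}(\gamma)$ and $p^*(\gamma)$ evaluated on $a$ enter: by Ligozat's cusp-order formula these compute the order of $\widetilde g$ at the cusp represented by $\gamma$, the first from the extracted piece and the second from the auxiliary eta-quotient, so the standing assumption that this sum is nonnegative over a complete set of double-coset representatives of $\Gamma_0(N)\backslash\Gamma/\Gamma_\infty$ says exactly that $\widetilde g$ is holomorphic at every cusp. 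Hence $\widetilde g$ is a holomorphic modular form of weight $k$ on $\Gamma_0(N)$ with integer Fourier coefficients.

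Finally I would invoke the valence bound. A nonzero holomorphic modular form of weight $k$ on $\Gamma_0(N)$ has total order of vanishing $\tfrac{k}{12}[\Gamma:\Gamma_0(N)]$; working modulo $u$, a version of Sturm's theorem (reduced to prime-power moduli by the Chinese Remainder Theorem, the Sturm bound being insensitive to the exponent) shows that if the reduction of $\widetilde g$ vanishes modulo $u$ to order exceeding this bound at the cusp $\infty$, then $\widetilde g\equiv 0\pmod u$ identically. Unwinding the normalization --- the subprogression lives in the variable $q^m$, which accounts for the factors $\tfrac{1}{m}$ and the shift by $t_{\min}=\min_{t'\in P_{m,r}(t)}t'$ in the definition of $\nu$ --- translates ``order of vanishing exceeds $\tfrac{k}{12}[\Gamma:\Gamma_0(N)]$'' into ``$c_r(mn+t')\equiv 0\pmod u$ for all $n\leq\lfloor\nu\rfloor$ and all $t'$,'' which is precisely the stated finite hypothesis, and the conclusion for all $n$ follows.

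I expect the main obstacle to lie in the modularity and cusp-order steps: tracking the eta multiplier system through the $U_m$-twist under $\Gamma_0(N)$, and establishing that the cusp-order quantities $p_{m,r}$ and $p^*$ are correct, both require careful bookkeeping that the conditions defining $\Delta^*$ are tailored to control. The passage to a composite modulus $u$ in Sturm's theorem is the other delicate point, since it needs the prime-power refinement rather than the classical statement for a single prime.
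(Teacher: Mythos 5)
This lemma is not proved in the paper at all: it is quoted verbatim from Radu's work (\cite{Radu1a}, Lemma 4.5) and used as a black box, so there is no in-paper proof to compare yours against. Judged on its own terms, your sketch assembles the right ingredients --- eta-quotients, Newman/Ligozat-type conditions for modularity and for cusp orders, and a Sturm-type valence bound --- and this is indeed the strategy of the cited source.

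As a proof, however, it has a genuine gap at the central step. The series obtained by extracting the progression $n\equiv t'\pmod m$ (renormalized in $q^{1/m}$) is \emph{not} invariant with trivial multiplier under $\Gamma_0(N)$, even after multiplying by the auxiliary eta-quotient: the slash action of $\Gamma_0(N)$ permutes the family $\{g_{m,t'}\}_{t'\in P_{m,r}(t)}$ according to the action $t\mapsto ts+\frac{s-1}{24}\sum_{\delta\mid M}\delta r_\delta$ of $s\in S_{24m}$, which is precisely why the orbit $P_{m,r}(t)$ is defined at all. You invoke this orbit structure to explain why all $t'$ appear, but your modularity step then treats a single $\widetilde g$ as a $\Gamma_0(N)$-form, which is false in general. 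The valence argument must be applied to a symmetrization over the orbit (in Radu's treatment, essentially $\prod_{\delta\mid N}\eta(\delta\tau)^{a_\delta}\prod_{t'\in P_{m,r}(t)}g_{m,t'}(\tau)$), and the minimum over $d\in\{0,\dots,m-1\}$ built into $p_{m,r}(\gamma)$ exists exactly to control the cusp order of the whole orbit simultaneously. Consequently the passage from the valence bound to the stated $\nu$ --- why the finite check is needed for \emph{every} $t'\in P_{m,r}(t)$, how the weight, the term $\frac{1}{24m}\sum_{\delta\mid M}\delta r_\delta$, and the shift $\frac{t_{\min}}{m}$ emerge --- is the substantive content of the lemma, and it is precisely the part your sketch does not carry out; it is not routine bookkeeping, and one must consult \cite{Radu1a} for it.
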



Recall that 
$$\sum_{n=0}^{\infty}b_3(n)q^n=\prod_{n=1}^{\infty}\frac{(1-q^{3n})}{(1-q^n)}. $$
Let 
$$\sum_{n=0}^\infty b(n) q^n:=\prod_{n= 1}^\infty \frac{ (1-q^n)^4}{1-q^{3n}}$$ In \cite[pg. 287]{Keith}  it is shown  that
$$ \sum_{n=0}^\infty b_3(2n)q^n\equiv  \sum_{n=0}^\infty b(n)q^n\pmod{2}.$$
We will prove below that   $$b(p^2\,n+p\alpha-24_p^{-1})\equiv 0\pmod{2}$$ where 
$p\in \mathcal U$ and  $\alpha\neq \lfloor p/24\rfloor$  is a residue modulo $p$.

To use the Lemma \ref{radu}, we write  $$\prod_{n= 1}^\infty \frac{ (1-q^n)^4}{1-q^{3n}}=\prod_{\delta\mid M}\prod_{n=1}^\infty (1-q^{\delta n})^{r_\delta}.$$

Since we want to prove congruences of the form $b(p^2\,n+p\alpha-24_p^{-1})\equiv 0\pmod{2}$ for certain primes $p$ and  integers $0\leqslant\alpha\leqslant p-1$, $p\neq \lfloor p/24\rfloor$,  we have $u=2$ and  $m=p^2$. Then, $\kappa=24$. We let $M=3$,  and $(r_{1},r_{3})=(4,-1)$.  
Since $\prod_{\delta\mid M}\delta^{|r_\delta |}=3$, we have $s=0$ and $\ell=3$. We take $N=3p$ and verify that for any $t=p\alpha-24_p^{-1}$ as in the statement of Theorem \ref{Tmain}  conditions 1.-6. are satisfied.

Since $$[\Gamma:\Gamma_0(N)]=N\prod_{x\mid N}(1+x^{-1}),$$ where the product is taken after all prime divisors of $N$, we have $[\Gamma:\Gamma_0(N)]=4(p+1).$ 

Since $N$ is square free, by \cite[Lemma 4.3]{Wang}, a complete set of representatives for  $\Gamma_0(N)\backslash \Gamma/\Gamma_\infty$ is given by $$\mathcal A_N=\left\{\left(\begin{array}{cc}1 & 0 \\ \delta & 1\end{array}\right): \delta \mid N, \delta \geqslant 1\right\}.$$ Here, the positive divisors of $N=3p$ are $\{1, 3,  p,  3p\}$.

We complete the proof of the congruences of Theorem \ref{Tmain} for $p\in \mathcal U$ case by case. For the discussion below we used Mathematica to  calculate all needed data.  Let $$\res_p:=\{p\alpha-24_p^{-1} : 0\leqslant \alpha<p, \ \alpha\neq \lfloor p/24\rfloor\}.$$
In each case below, we verified that $p_{m,r}(\gamma)>0$ for all $\gamma \in \mathcal A_N$ and thus we can take $a_\delta=0$ for all $\delta \mid N$. 
We give more details in the case $p=29$, including the values for $p_{m,r}(\gamma)$,  and omit the details in the other cases. 

  \noindent \underline{Case $p=29$.} We have $24_{p}^{-1}\equiv -6 \pmod{29}$ and $\lfloor p/24\rfloor=1$. We find that $$P(6)=\{6, 64, 151, 180, 209, 238, 296, 412, 499,  615,  673, 702, 731, 760\}$$ and $$P(3\cdot 29+6)=\{93, 122, 267, 325,  354,  383, 441,470,  528,  557, 586,  644, 789,   818\}.$$
 Thus,  $$P(6)\cup P(3\cdot 29+6)=\res_{29}.$$ Next we calculate $p_{m,r}(\gamma)$ for each $\gamma \in \mathcal A_N$. We obtain $$\left\{\frac{11}{60552}, \frac{1}{20184},\frac{11}{60552}, \frac{1}{20184} \right\},$$ where the terms are listed in increasing order of $\delta \mid N$ defining $\gamma$. 

Thus, for  \begin{align*}a: =(a_{1},a_{3},a_{5},a_{15}) =(0,0,0,0)\end{align*} we have $p_{m,r}(\gamma)+p^*_s(\gamma)\geqslant 0$ for all $\gamma\in \mathcal A_N.$

From Lemma \ref{radu}, we obtain $\lfloor \nu \rfloor=14$   when $t=6$ and also when $t=3\cdot 29+6=93$. 

 It is easily verified that for each $t\in\res_{29}$, we have $$ b(29^2n+t) \equiv 0\pmod 2 \text{ for } 0\leqslant n\leqslant 14.$$ 
Thus, by Lemma \ref{radu}, for each $t\in\res_{29}$, we have $$ b(29^2n+t)\equiv 0\pmod 2 \text{ for all } n\geqslant 0.$$

 For the other primes in $\mathcal U$, we list the relevant data needed for Lemma \ref{radu} as well as the obtained bound in the table below. In the table, $A$ is the smallest positive integer not equal to $\lfloor p/24\rfloor$ such that $P(-24_p^{-1})\cup P(Ap-24_p^{-1})=\res_{p}.$ In each case, we used Mathematica to verify that $p_{m,r}(\gamma)>0$ for each $\gamma \in \mathcal A_N$, and we used $a: =(a_{1},a_{3},a_{5},a_{15}) =(0,0,0,0)$ to calculate $\lfloor \nu \rfloor$. 
 
 $$\begin{array}{c||c|c|c|c}\quad p \quad &\quad -24_p^{-1}\quad& \quad\lfloor p/24\rfloor\quad& \quad A\quad & \quad\lfloor \nu \rfloor\quad\\ \hline \hline  59 & 27 & 2& 1& 29\\ \hline 79 & 23& 3& 1& 39\\  \hline 103 & 30 & 4& 1& 51 \\ \hline 223 & 65& 9& 3& 111 \\ \hline 227 & 104 & 9& 1& 113 \\ \hline 241 & 10 & 10& 3& 120 \\ \hline 251 & 115& 10 & 1& 125 \\ \hline 269 & 56 & 11 & 1& 134 \\ \hline 293 & 61 & 12 & 2 & 146 \\ \hline 337 & 14 & 14 & 3& 168 \\ \hline 419 & 192 & 17 & 1& 209 \\ \hline 443 & 203 & 18 & 1 & 221 \\ \hline 487 & 142 & 20 &  1 &  243 
 \end{array}$$

\smallskip

 In each case, we verified the congruences up to the bound $\lfloor \nu \rfloor$ given by Lemma \ref{radu}. Hence, for $p\in \mathcal U$,  the congruences hold for all $n\geqslant 0$. We needed to compute  $b_3(m)$ modulo $2$ for all $m\leqslant 2(487^2\cdot 243+486)=115\,265\,106$. Since this was computationally expensive, we had to optimize the algorithm generating $b_3(m)$ modulo $2$ several times.

\subsection{Proof of Theorem \ref{Tmain} for all $p\in \mathcal P$} \label{2.2}
From \cite{Keith}, we have \begin{align*}\sum_{n=0}^\infty b_3(2n)q^n & \equiv \sum_{n\in \mathbb Z}q^{\frac{n}{2}(3n-1)^2}\left( 1+\sum_{n\in \mathbb Z}q^{(3n-1)^2}
\right)\\ & \equiv \sum_{n\in \mathbb Z}q^{\frac{n}{2}(3n-1)}+ \sum_{n,m\in \mathbb Z}q^{\frac{n}{2}(3n-1)+ (3m-1)^2}\\ & =: \sum_{r=0}^\infty a(r)q^r,\end{align*} where $a(r)$ is the number of representations of $r$ as the sum of a generalized pentagonal number and a the square of a an integer $y$ such that $y \not \equiv 0 \pmod 3$ if $y \neq 0$. Thus, $$a(r)=|\{(n,m)\in \mathbb Z^2;\  \frac{n}{2}(3n-1)+ (3m-1)^2=r\}|+\delta_{r = \frac{n}{2}(3n-1)}.$$ Here, $\delta_\rho$ is the Kronecker delta function which equals $1$ if statement $\rho$ is true and $0$ otherwise.  We can rewrite $a(r)$ as  $$a(r)=|\{(x,y)\in \mathbb N^2;\ x^2+24y^2=24r+1,  y=0 \text{ or } 3\nmid y.\}|$$ 

Our goal is to prove that for $p\in \mathcal P$, $n\geqslant 0$, $0\leqslant\alpha<p$, $\alpha \neq \lfloor p/24\rfloor$, we have $a(p^2\,n+p\,\alpha-24_p^{-1})\equiv 0 \pmod 2.$ 
\smallskip

\subsubsection{Preliminary lemmas}

In this subsection, we  prove several lemmas which will be key ingredients in the proof of the theorem. 
\begin{lemma} \label{L2}For any prime $p\geqslant 5$ and $0\leqslant\alpha\leqslant p-1$, $\alpha \neq \lfloor \frac{p}{24}\rfloor$,   we have $p^2 \nmid 24(p\alpha-24^{-1}_p)+1$. 
\end{lemma}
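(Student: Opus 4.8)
The plan is to show that $p^2 \nmid 24(p\alpha - 24_p^{-1})+1$ by reducing the claim to an arithmetic statement about $\alpha$ modulo $p$. First I would expand the expression: writing $24_p^{-1}$ for the inverse of $24$ modulo $p$ normalized so that $1 \leqslant -24_p^{-1} \leqslant p-1$, I compute
\begin{equation*}
24(p\alpha - 24_p^{-1}) + 1 = 24p\alpha - 24\cdot 24_p^{-1} + 1.
\end{equation*}
Since $24 \cdot 24_p^{-1} \equiv 1 \pmod p$ by definition, the quantity $-24\cdot 24_p^{-1}+1$ is divisible by $p$; in fact $24\cdot 24_p^{-1} = 1 + kp$ for some integer $k$, so the whole expression equals $p(24\alpha - k)$. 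Hence $p$ divides $24(p\alpha - 24_p^{-1})+1$ automatically, and the content of the lemma is precisely that the \emph{second} factor of $p$ does not appear, i.e. $p \nmid (24\alpha - k)$.

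Next I would pin down $k$ explicitly. From $24 \cdot 24_p^{-1} = 1 + kp$ and the normalization of $24_p^{-1}$, one reads off $k$ in terms of how $-24_p^{-1}$ sits in $\{1,\dots,p-1\}$; the cleanest route is to note that $p^2 \mid 24(p\alpha-24_p^{-1})+1$ is equivalent, after dividing by $p$, to the congruence $24\alpha \equiv k \pmod p$, which has a unique solution $\alpha_0 \in \{0,\dots,p-1\}$ since $\gcd(24,p)=1$ for $p \geqslant 5$. So the lemma amounts to identifying this unique obstructing value $\alpha_0$ and checking that $\alpha_0 = \lfloor p/24 \rfloor$, the value explicitly excluded in the hypothesis. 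This is the crux of the argument.

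To identify $\alpha_0$, I would solve $24(p\alpha_0 - 24_p^{-1}) + 1 \equiv 0 \pmod{p^2}$ directly. Set $t_0 := 24(p\alpha_0 - 24_p^{-1})+1$; the condition $p^2 \mid t_0$ together with $t_0 \equiv 1 \pmod{24}$ (which holds since $t_0 = 24(\cdots)+1$) forces $t_0$ to be a specific small multiple of $p^2$, and comparing sizes should show $t_0 \in \{0, p^2\}$ in the relevant range, pinning down $\alpha_0$. Concretely, $24 p \alpha_0 = 24\cdot 24_p^{-1} - 1 + p^2 \cdot(\text{something})$; dividing through and using $0 \leqslant \alpha_0 < p$ isolates $\alpha_0 = \lfloor p/24 \rfloor$ after accounting for the sign convention on $24_p^{-1}$. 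The main obstacle I anticipate is bookkeeping: correctly tracking the normalization $1 \leqslant -24_p^{-1} \leqslant p-1$ through the floor function and verifying that the unique solution of $24\alpha \equiv k \pmod p$ really equals $\lfloor p/24 \rfloor$ rather than an off-by-one neighbor. Once $\alpha_0 = \lfloor p/24\rfloor$ is established, the hypothesis $\alpha \neq \lfloor p/24 \rfloor$ guarantees $\alpha \neq \alpha_0$, hence $p \nmid (24\alpha - k)$ and therefore $p^2 \nmid 24(p\alpha - 24_p^{-1})+1$, completing the proof.
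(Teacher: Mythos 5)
Your proposal is correct and follows essentially the same route as the paper: write $24\cdot 24_p^{-1}=1+kp$ (equivalently $24\cdot(-24_p^{-1})=tp-1$ with $1\leqslant t\leqslant 23$), use the size bounds on $\alpha$ and $-24_p^{-1}$ together with $p^2\equiv 1\pmod{24}$ to force $24(p\alpha-24_p^{-1})+1=p^2$, and then read off $24\alpha+t=p$, i.e.\ $\alpha=\lfloor p/24\rfloor$. The only cosmetic slip is the suggestion that $t_0$ might be $0$: since $t_0\geqslant 25$ in the stated range, the obstructing value satisfies $t_0=p^2$ exactly, just as in the paper's contradiction argument.
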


\begin{proof}
Recall that $-24^{-1}_p$ as an integer between $1$ and $p-1$. Then, $24\cdot (-24^{-1}_p)=tp-1$ with $1\leqslant t \leqslant 23$. Thus, $24(p\alpha-24^{-1}_p)+1= 24p \alpha+tp$.

Suppose $p^2 \mid 24(p\alpha-24^{-1}_p)+1$. Then $24(p\alpha-24^{-1}_p)+1=kp^2$ for some $k\in \mathbb Z$. Since $p^2\equiv 1 \pmod{24}$, we must have $k\equiv 1\pmod{24}$. Since $1\leqslant\alpha <p$ and $1\leqslant-24^{-1}_p\leqslant p-1$, we have $1\leqslant p\alpha-24^{-1}_p\leqslant p^2-1$ and  we must have $k=1$. 
Then,  $24(p\alpha-24^{-1}_p)+1=p^2$, and thus $24\alpha+t=p$. Finally, since $1\leqslant t \leqslant 23$, we have  $\alpha= \lfloor \frac{p}{24}\rfloor$ which is a contradiction. 
\end{proof}

\begin{corollary}\label {C_pm} For any prime $p\geqslant 5$ and $0\leqslant\alpha <p$, $\alpha \neq \lfloor \frac{p}{24}\rfloor$, and  $s=p^2n+p\alpha-24^{-1}_p$.  Then, $24s+1=pm$, for some positive integer $m$ such that $p\nmid m$.
\end{corollary}

\begin{proof}We have  $24s+1=24p^2n+24p\alpha-24^{-1}_p\cdot 24+1$ and the result follows immediately form  Lemma \ref{L2}.
\end{proof}

Next, for $m$ coprime to $6$ and $j\in \{1,4,8\}$, we consider the possibilities for $m \pmod{24}$ when \begin{equation} \label{eqn_j} x^2+216 y^2=jm\end{equation} has  primitive solutions. 

\begin{lemma} \label{L_j=1} Let $m$ be a positive integer coprime to $6$. If equation \ref{eqn_j} with $j=1$ has  primitive solutions, then $m\equiv 1 \pmod{24}$. 
\end{lemma}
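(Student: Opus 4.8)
The plan is to analyze equation \eqref{eqn_j} with $j=1$, namely $x^2 + 216y^2 = m$, modulo $24$ and extract the constraint on $m$ from a primitive solution $(x,y)$ with $\gcd(x,y)=1$. First I would observe that $216 = 8\cdot 27 \equiv 0 \pmod{24}$, so the equation reduces immediately to $x^2 \equiv m \pmod{24}$. Thus it suffices to show that $x^2 \equiv 1 \pmod{24}$, which in turn forces $m \equiv 1 \pmod{24}$.

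To establish $x^2 \equiv 1 \pmod{24}$, the key step is to verify that $\gcd(x,6)=1$, since it is a standard fact (already used in the excerpt, where the authors note that $\alpha^2 \equiv 1 \pmod{24}$ when $\gcd(\alpha,6)=1$) that any integer coprime to $6$ has square congruent to $1$ modulo $24$. So the heart of the argument is the coprimality claim. For this I would use the hypotheses that $m$ is coprime to $6$ and that $\gcd(x,y)=1$. If $2 \mid x$, then since $x^2 + 216y^2$ is even, $m$ would be even, contradicting $\gcd(m,6)=1$; so $x$ is odd. Similarly, if $3 \mid x$, then because $216 = 3^3\cdot 8$ is divisible by $3$, we would get $3 \mid m$, again contradicting $\gcd(m,6)=1$. (Note the primitivity hypothesis $\gcd(x,y)=1$ is not even strictly needed here, since the coprimality of $m$ to $6$ already pins down $x \bmod 6$; but it is available and harmless.) Hence $\gcd(x,6)=1$.

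Putting these together: from $\gcd(x,6)=1$ we get $x^2 \equiv 1 \pmod{24}$, and reducing $x^2 + 216y^2 = m$ modulo $24$ gives $m \equiv x^2 \equiv 1 \pmod{24}$, as desired.

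I do not expect any genuine obstacle here — the statement is essentially a congruence bookkeeping exercise and the argument is short. The only point requiring a little care is making sure each factor of $6$ dividing $x$ is ruled out using the correct divisibility of $216$ (namely $2 \mid 216$ and $3 \mid 216$) together with $\gcd(m,6)=1$, rather than accidentally invoking primitivity where it is not needed. The companion lemmas for $j=4$ and $j=8$ presumably proceed by the same modular reduction, tracking how the extra factor of $j$ shifts the residue of $m$ modulo $24$, but that is outside the present statement.
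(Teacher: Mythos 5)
Your argument is correct and matches the paper's proof: both establish that $2,3\nmid x$ from $\gcd(m,6)=1$, conclude $x^2\equiv 1\pmod{24}$, and reduce the equation modulo $24$ using $216\equiv 0\pmod{24}$. Your write-up merely makes explicit the divisibility checks that the paper leaves implicit.
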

\begin{proof}
Let $m>0$, $\gcd(m,6)=1$  and let $x,y\in \mathbb Z$ with $\gcd(x,y)=1$ be such that  $x^2+216y^2=m$, Then,   $2, 3 \nmid x$ and thus $x^2\equiv 1 \pmod{24}$. It follows that  $m\equiv 1\pmod{24}$. 
\end{proof}
\begin{lemma} \label{L_j=4} Let $m$ be a positive integer coprime to $6$. If equation \ref{eqn_j} with $j=4$ has  primitive solutions, then $m\equiv 7 \pmod{24}$. 
\end{lemma}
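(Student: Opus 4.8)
The plan is to follow the template of the proof of Lemma \ref{L_j=1}, the only new feature being the factor of $4$ on the right-hand side, which forces $x$ to be even and thus lets us pass to a reduced equation. Suppose $m>0$ with $\gcd(m,6)=1$, and let $x,y\in\mathbb Z$ with $\gcd(x,y)=1$ satisfy $x^2+216y^2=4m$. Since $216y^2$ and $4m$ are both even, $x^2$ is even, so $x$ is even; write $x=2x'$. Dividing by $4$ yields the equation $x'^2+54y^2=m$, which I will analyze modulo $8$ and modulo $3$ separately and then recombine via the Chinese Remainder Theorem.

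The key preliminary step is the parity bookkeeping, which is where one must take care. First, $y$ must be odd: if $y$ were even, then since $x$ is even, $2$ would divide $\gcd(x,y)=1$, a contradiction (in particular $y\neq 0$). Next, $x'$ must be odd: in $m=x'^2+54y^2$ the term $54y^2$ is even while $m$ is odd, so $x'^2$ is odd. Hence both $x'$ and $y$ are odd, and I may use $x'^2\equiv y^2\equiv 1\pmod 8$.

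Modulo $8$, since $54\equiv 6\pmod 8$ and $y^2\equiv 1\pmod 8$, I obtain $m=x'^2+54y^2\equiv 1+6\equiv 7\pmod 8$. Modulo $3$, since $54\equiv 0\pmod 3$, I obtain $m\equiv x'^2\pmod 3$; because $\gcd(m,3)=1$ forces $3\nmid x'$, this gives $m\equiv 1\pmod 3$. Solving the simultaneous congruences $m\equiv 7\pmod 8$ and $m\equiv 1\pmod 3$ by the Chinese Remainder Theorem yields $m\equiv 7\pmod{24}$, as claimed.

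There is no serious obstacle here; the argument is a short congruence computation. The only point requiring attention is establishing the parities of $x$, $y$, and $x'$ correctly, since the final residue $7\pmod{24}$ depends on having $x'$ and $y$ both odd (so that $x'^2\equiv y^2\equiv 1\pmod 8$) together with $3\nmid x'$. These three facts follow, respectively, from the evenness forced by the factor $4$, from the primitivity condition $\gcd(x,y)=1$, and from the hypothesis $\gcd(m,6)=1$.
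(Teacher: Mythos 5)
Your proof is correct and follows essentially the same route as the paper: divide by $4$ to get $x'^2+54y^2=m$, establish that $x'$ and $y$ are odd and $3\nmid x'$, and finish with a congruence computation (the paper works directly modulo $24$ using $x'^2\equiv 1$ and $y^2\equiv 1,9\pmod{24}$, whereas you split into moduli $8$ and $3$ and recombine, which is the same calculation organized slightly differently).
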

\begin{proof} Let $m>0$, $\gcd(m,6)=1$  and let  $x,y\in \mathbb Z$ with $\gcd(x,y)=1$ be such that  $x^2+216y^2=4m$. Then $x=2z$ for some $z\in \mathbb Z$ and  we have $z^2+54y^2=m$. Therefore, $2, 3 \nmid z$ and thus $z^2\equiv 1 \pmod{24}$. Since $\gcd(x, y)=1$, it follows that $2\nmid y$ and $y^2\equiv 1, 9 \pmod{24}$. Then $m\equiv 1+6y^2 \equiv 7 \pmod{24}$.
\end{proof}

\begin{lemma} \label{L_j=8} Let $m$ be a positive integer coprime to $6$. If equation \ref{eqn_j} with $j=8$ has  primitive solutions, then $m\equiv 5,11 \pmod{24}$. Moreover, if $(x,y)$ is a primitive solution for \eqref{eqn_j} with $j=8$, then 
\begin{itemize}\item[(i)] if $m\equiv 5 \pmod{24}$, then $\val_2(x)=2$;
\item[(ii)] f $m\equiv 11 \pmod{24}$, then $\val_2(x)\geqslant 3$.
\end{itemize}
\end{lemma}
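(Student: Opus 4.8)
The plan is to mirror the modular arithmetic approach used in Lemmas \ref{L_j=1} and \ref{L_j=4}, but now track the $2$-adic valuation of $x$ more carefully, since the factor $j=8$ forces $x$ to be even while $\gcd(x,y)=1$ forces $y$ to be odd. First I would write $x^2 + 216y^2 = 8m$ with $\gcd(x,y)=1$ and $\gcd(m,6)=1$. Since $216 = 8\cdot 27$, the right side is divisible by $8$, so $x^2 \equiv 0 \pmod 8$, forcing $2\mid x$, say $x = 2z$. Then $4z^2 + 216 y^2 = 8m$, i.e. $z^2 + 54y^2 = 2m$. Because $\gcd(x,y)=1$ and $x$ is even, $y$ must be odd, so $3\nmid y$ would need checking; but $3\mid 54y^2$ and $3\nmid m$ forces $3\nmid z^2$, hence $3\nmid z$. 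The parity of $2m$ (even) together with $54y^2$ even and $y$ odd forces $z$ to be even as well, so I would set $z = 2w$, giving $4w^2 + 54 y^2 = 2m$, i.e. $2w^2 + 27 y^2 = m$. Now $x = 4w$, so $\val_2(x)\geqslant 2$ already, matching the claim that $x$ is genuinely divisible by $4$.

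For the congruence statement, I would reduce $2w^2 + 27y^2 = m$ modulo $24$. Since $y$ is odd and $3\nmid y$, we have $y^2 \equiv 1 \pmod{24}$, so $27y^2 \equiv 27 \equiv 3 \pmod{24}$. Thus $m \equiv 2w^2 + 3 \pmod{24}$. The residue of $2w^2$ modulo $24$ depends only on $w \pmod{12}$: if $w$ is odd then $w^2\equiv 1$ or $9 \pmod{12}$ giving $2w^2 \equiv 2$ or $18\pmod{24}$, so $m \equiv 5$ or $21 \pmod{24}$; but $3\mid 21$ contradicts $\gcd(m,6)=1$, ruling out $2w^2\equiv 18$, leaving $m\equiv 5$. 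If $w$ is even, $2w^2 \equiv 0$ or $8 \pmod{24}$, giving $m\equiv 3$ or $11$; again $3\mid 3$ is excluded, leaving $m\equiv 11$. This shows $m \equiv 5, 11 \pmod{24}$, and simultaneously pins down the parity of $w$: the case $m\equiv 5$ corresponds exactly to $w$ odd, and $m\equiv 11$ to $w$ even.

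The refined valuation claims then follow from the parity of $w$. Recall $x = 4w$, so $\val_2(x) = 2 + \val_2(w)$. In case (i), $m\equiv 5\pmod{24}$ forces $w$ odd, hence $\val_2(w)=0$ and $\val_2(x)=2$ exactly, as claimed. In case (ii), $m\equiv 11\pmod{24}$ forces $w$ even, hence $\val_2(w)\geqslant 1$ and $\val_2(x)\geqslant 3$, again as claimed. I expect the only delicate point to be the bookkeeping of which residue classes are excluded by the coprimality $\gcd(m,6)=1$ versus the intrinsic parity of $w$; everything else is a direct chain of substitutions $x\to z\to w$ dictated by successive divisibility by $2$. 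The main obstacle, such as it is, is simply being careful that each extraction of a factor of $2$ is forced (not merely permitted) by the equation, which it is, since at each stage the right-hand side is even while exactly one of the two square terms is odd.
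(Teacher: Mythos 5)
Your argument follows essentially the same route as the paper's proof: both reduce $x^2+216y^2=8m$ to $2w^2+27y^2=m$ with $x=4w$ and then determine $m\bmod 24$ by casework on the parity of $w$, which in turn pins down $\val_2(x)$. One small correction: you assert $3\nmid y$, which you never establish and which is false in general (e.g.\ $(x,y)=(4,3)$ is a primitive solution of $x^2+216y^2=8\cdot 245$ with $\gcd(245,6)=1$); fortunately this is harmless, because $27y^2\equiv 3y^2\equiv 3\pmod{24}$ for every odd $y$ regardless of whether $3\mid y$, so your congruence $m\equiv 2w^2+3\pmod{24}$ and everything downstream remain valid. The paper avoids this slip by allowing $y^2\equiv 1$ or $9\pmod{24}$ and checking both cases.
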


\begin{proof} Let $m>0$, $\gcd(m,6)=1$   and suppose  $x, y\in \mathbb Z$ are such that  $\gcd(x,y)=1$ and $x^2+216y^2=8m$. Then, $x=4z$ for some $z\in \mathbb Z$ and $$2z^2+27y^2=m.$$ Since $\gcd(m,6)=1$, it follows that $2 \nmid y$ and $3\nmid z$. Then, one can check that   $y^2 \equiv 1,9\pmod{24}$ and  $z^2 \equiv 1,4, 16\pmod{24}$. A case by case verification shows that if $z^2\equiv 1\pmod{24}$, and $y^2\equiv 1,9\pmod{24}$, then $m\equiv 5\pmod{24}$, and if $z^2\equiv 4,16\pmod{24}$, and $y^2\equiv 1,9\pmod{24}$, then $m\equiv 11\pmod{24}$. Thus, if $m\equiv 5 \pmod{24}$, then $\val_2(z)=0$ and thus $\val_2(x)=2$, and if $m\equiv 11 \pmod{24}$, then $\val_2(z)\geqslant 1$ and thus $\val_2(x)\geqslant 3$.
\end{proof}
In particular, the previous three lemmas show that, if $p\in \mathcal P$, then \begin{align*}
j=1 & \implies p \equiv 1\pmod{24}\\
j=4 & \implies p \equiv 7\pmod{24}
\\ j=8 & \implies p \equiv 5,11 \pmod{24},\end{align*} which was noted in Remark \ref{R1}.

\begin{lemma}  \label{Lmain} Let $p\in \mathcal P$ and let $m$ be a positive integer  coprime to  $p$ such that $pm\equiv 1\pmod{24}$.  Let $j\in \{1, 4, 8\}$ such that $x^2+216 y^2=jp$ has primitive solutions. Define  \begin{align*} X_{p,m}& := \{(x,y)\in \mathbb Z^2 : x^2+216y^2=pm, \gcd(x,y)=1\}\\  A_{m}& := \{(a,b)\in \mathbb Z^2 : a^2+216b^2=jm, \gcd(a,b)=1\}.\end{align*} Then $|X_{p,m}|=2|A_m|$.
\end{lemma}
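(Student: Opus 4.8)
The plan is to realize both sets as norms in the quadratic order $\mathbb{Z}[\sqrt{-216}]$ and to transport solutions from $jm$ to $pm$ by multiplying with a fixed element of norm $jp$. Fix a primitive solution $(x_0,y_0)$ of $x^2+216y^2=jp$ and set $\alpha_0:=x_0+y_0\sqrt{-216}$, so that $N(\alpha_0)=\alpha_0\overline{\alpha_0}=x_0^2+216y_0^2=jp$. First note $y_0\neq 0$: otherwise $jp=x_0^2$ would be a perfect square, impossible for $j\in\{1,4,8\}$ and $p$ prime. Reducing $jp=x_0^2+216y_0^2$ modulo $p$ shows $-6$ is a quadratic residue mod $p$ (one checks $p\nmid y_0$), and since $p\geqslant 5$ is coprime to the conductor $6$, the prime $p$ splits as a product of two distinct conjugate primes, one dividing $(\alpha_0)$ and one dividing $(\overline{\alpha_0})$. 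For $\beta=a+b\sqrt{-216}$ with $(a,b)\in A_m$ I would form the two products
\begin{align*}
\alpha_0\beta&=(x_0a-216y_0b)+(x_0b+y_0a)\sqrt{-216},\\
\overline{\alpha_0}\,\beta&=(x_0a+216y_0b)+(x_0b-y_0a)\sqrt{-216},
\end{align*}
each of norm $jp\cdot jm=j^2pm$, and then divide both coordinates by $j$ to produce two candidate elements of $X_{p,m}$.

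The crux is that $j$ divides both coordinates of each product. For $j=1$ this is vacuous. For $j=4$, Lemma \ref{L_j=4} gives $x_0=2z_0$ with $z_0,y_0$ odd, and likewise $a=2a'$ with $a',b$ odd; then $x_0a\pm216y_0b$ is divisible by $4$ (both summands are), while $x_0b\pm y_0a=2(z_0b\pm a'y_0)$ with $z_0b\pm a'y_0$ even, so $4\mid x_0b\pm y_0a$. For $j=8$ I would first observe that $pm\equiv 1\pmod{24}$ together with Lemma \ref{L_j=8} forces $p\equiv m\pmod{24}$ (both $\equiv 5$ or both $\equiv 11$), and then use the refined valuations $\val_2(x_0),\val_2(a)\in\{2,\geqslant 3\}$ recorded there to verify $8\mid x_0a\pm216y_0b$ and $8\mid x_0b\pm y_0a$ in each subcase. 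Granting this, the quotients $(X,Y)$ satisfy $X^2+216Y^2=pm$. Primitivity is then immediate: since $pm\equiv 1\pmod{24}$ is coprime to $6$ we get $\gcd(X,6)=1$, while any common prime $\ell\geqslant 5$ of $X,Y$ would force $\ell^2\mid pm$, hence $\ell\mid m$ (as $p$ is prime and $p\nmid m$); but $N(\alpha_0)=jp$ is coprime to $\ell$, so $\alpha_0$ is invertible modulo $\ell$ and $\ell\mid \alpha_0\beta$ would give $\ell\mid\beta$, contradicting $\gcd(a,b)=1$. Thus both $\alpha_0\beta/j$ and $\overline{\alpha_0}\,\beta/j$ lie in $X_{p,m}$.

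Finally I would organize the count through the reverse map. Given $\mu=X+Y\sqrt{-216}\in X_{p,m}$ of norm $pm$, exactly one of the two conjugate primes above $p$ divides $(\mu)$ (their valuations sum to $\val_p(pm)=1$), so exactly one of $\mu\overline{\alpha_0}/p$ and $\mu\alpha_0/p$ is integral; I define $G(\mu)$ to be that quotient, an element of norm $jm$ which is integral and, by the argument above, primitive, hence $G(\mu)\in A_m$. The fibre of $G$ over $\beta\in A_m$ consists exactly of the two forward images $\alpha_0\beta/j$ and $\overline{\alpha_0}\,\beta/j$ (using $p=\alpha_0\overline{\alpha_0}/j$), which lie in $X_{p,m}$ by the previous paragraph and are distinct because $\alpha_0\neq\overline{\alpha_0}$ (as $y_0\neq 0$). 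Therefore $G$ is surjective with every fibre of size $2$, giving $|X_{p,m}|=2|A_m|$. The main obstacle is the $j=8$ case of the divisibility step: it is precisely there that the coarse congruence data are insufficient and the finer $2$-adic valuation statements of Lemma \ref{L_j=8} are required, which is the analytic shadow of the fact that $216$ is not idoneal.
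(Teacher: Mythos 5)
Your proof is correct and is essentially the paper's own argument: the paper also fixes a primitive solution $(x_1,y_1)$ of $x_1^2+216y_1^2=jp$, defines the map $X_{p,m}\to A_m$ by the same two conjugate products divided by $p$ (split into cases according to which of $x_1x\mp 216y_1y$ is divisible by $p$), and establishes surjectivity and the two-to-one property by forming the two products with $\beta$ divided by $j$, with integrality in the $j=8$ case resting on exactly the $2$-adic valuation statements of Lemma \ref{L_j=8}. Your phrasing via norms in $\mathbb{Z}[\sqrt{-216}]$ and the splitting of $p$ is only a notational repackaging of the same computation.
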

\begin{proof}
Let $p\in \mathcal P$ and $j\in \{1, 4, 8\}$ be such that $x^2+216 y^2=jp$ has primitive solutions and fix $x_1, y_1\in \mathbb Z$ with $\gcd(x_1, y_1)=1$   satisfying $x_1^2+216 y_1^2=jp$. Let $m>1$ be  such that $p\nmid m$ and $pm \equiv 1 \pmod{24}$.  We define a mapping $f:X_{p,m}\to A_m$ by $f(x,y)=(a,b)$ as follows. Start with $(x,y)\in X_{p,m}$.
\begin{itemize} \item[(i)] If $x_1x-216y_1y \equiv 0 \pmod p$, one can show that $x_1y+y_1x\equiv 0 \pmod p$ and we define \begin{align*}a& := \frac{x_1x-216y_1y }{p}, \qquad b := \frac{x_1y+y_1x}{p}. \end{align*} 

 \item[(ii)] If $x_1x+216y_1y \equiv 0 \pmod p$, one can show that $x_1y-y_1x\equiv 0 \pmod p$ and we define \begin{align*}a& := \frac{x_1x+216y_1y }{p},\qquad b := \frac{x_1y-y_1x}{p}. \end{align*} 
\end{itemize}
Then,  in both cases $$a^2+216 b^2=\frac{(x_1^2+216y_1^2)(x^2+216y^2)}{p^2}=jm.$$ To see  that $\gcd(a,b)=1$, express $x$ and $y$ in terms of $a$ and $b$. In case (i) we have \begin{equation}\label{xy(i)} x=\frac{x_1a+216y_1b}{j}, \qquad y=\frac{bx_1-ay_1}{j},\end{equation} 
and in case (ii) we have \begin{equation}\label{xy(ii)} x=\frac{x_1a-216y_1b}{j}, \qquad y=\frac{bx_1+ay_1}{j}.\end{equation} Since $pm$ is odd, $x$ must be odd. If $j=4, 8$, then $x_1$ is even and since $\gcd(x_1, y_1)=1$, it follows that $y_1$ is odd. Therefore, $a$ is even and $b$ is odd, and  from \eqref{xy(i)} and \eqref{xy(ii)}, we have that $\gcd(a,b)\mid \gcd(x,y)$. Hence $\gcd(a,b)=1$ and $(a,b)\in A_m$.

Next, we show that $f$ is surjective and two-to-one. Start with $(a,b)\in A_m$ and define $x,y$ by the formulas given in \eqref{xy(i)} and $\tilde x, \tilde y$ by the formulas given in \eqref{xy(ii)}. It is straight forward to verify that \begin{align*} x^2+216y^2& =pm\\ \tilde x^2+216\tilde y^2& =pm. \end{align*}

As before, if $j=4$, then $2\mid a, x_1$. If $j=8$, then $4\mid a, x_1$. Thus, in all cases $x,\tilde x\in \mathbb Z$.

Since $\gcd(a,b)=\gcd(x_1, y_1)=1$, it follows that $2 \nmid b, y_1$. Then, if $j=4$, $y, \tilde y\in \mathbb Z$.

If $j=8$, by Lemma \ref{L_j=8}, $p\equiv 5, 11 \pmod{24}$. If $p\equiv 5\pmod{24}$, $\val_2(a)=\val_2(x_1)=2$. As above, $2 \nmid b, y_1$ and thus $y, \tilde y\in \mathbb Z$. If $p\equiv 5\pmod{24}$, we have $\val_2(a)$, $\val_2(x_1)\geqslant 3$ and  thus $y, \tilde y\in \mathbb Z$. 

  A similar discussion as above for $a, b$, shows that $\gcd(x,y)=\gcd(\tilde x, \tilde y)=1$.  
Thus, $(x,y), (\tilde x, \tilde y)\in X_{p,m}$. 

Moreover, one can check that \begin{align} \label{cong1} x_1x-216y_1y\equiv 0 \pmod p\\ \label{cong2}x_1\tilde x+216y_1\tilde y\equiv 0 \pmod p.\end{align} Hence, $f^{-1}(a,b)=\{(x,y), (\tilde x, \tilde y)\}$. 

It remains to show that $(x,y)\neq (\tilde x, \tilde y)$. Suppose $(x,y)= (\tilde x, \tilde y)$.
Then each pair $(x,y),(\tilde x, \tilde y)$ satisfies both congruences \eqref{cong1} and \eqref{cong2}. As a consequence, $x_1x\equiv 0\pmod p$ which contradicts $\gcd(x,y)=\gcd(x_1, y_1)=1$. 

This shows that $f$ is surjective and two-to-one.
\end{proof}

\begin{corollary} \label{Cmain} With $p, m, j$ and $X_{p,m}$ as in Lemma \ref{Lmain}, $|X_{p,m}|\equiv 0 \pmod 8$ if  $m>1$. If $m=1$, then $|X_{p,m}|=4$.  
\end{corollary}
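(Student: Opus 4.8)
The plan is to split along the two cases of the statement, reducing the case $m>1$ to a free group action and settling the case $m=1$ by the mass formula already invoked above.

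For $m>1$, Lemma \ref{Lmain} reduces the claim $8\mid |X_{p,m}|$ to showing $4\mid |A_m|$. I would let the group $(\mathbb{Z}/2\mathbb{Z})^2$ act on $A_m$ by the four sign changes $(a,b)\mapsto(\pm a,\pm b)$, which is well defined because both $a^2+216b^2$ and $\gcd(a,b)$ are sign invariant. A nontrivial element of the group fixes $(a,b)$ only when $a=0$ or $b=0$, so it suffices to rule out such axis solutions. If $b=0$, primitivity forces $|a|=1$ and hence $jm=1$, impossible for $m>1$; if $a=0$, then $216b^2=jm$ gives $3\mid jm$, and since $\gcd(j,3)=1$ this yields $3\mid m$, contradicting $\gcd(m,6)=1$. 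Here $\gcd(m,6)=1$ follows at once from $pm\equiv 1\pmod{24}$ together with $\gcd(p,6)=1$. Consequently the action is free, every orbit has size four, and $4\mid |A_m|$, so that $|X_{p,m}|=2|A_m|\equiv 0\pmod 8$.

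For $m=1$ the congruence $pm\equiv 1\pmod{24}$ forces $p\equiv 1\pmod{24}$, so Lemmas \ref{L_j=4} and \ref{L_j=8} exclude $j=4$ and $j=8$ and leave $j=1$; thus $x^2+216y^2=p$ has a primitive solution $(x_0,y_0)$. Since $p$ is prime, neither coordinate can vanish, so the four pairs $(\pm x_0,\pm y_0)$ are distinct elements of $X_{p,1}$ and $|X_{p,1}|\geq 4$. For the matching upper bound I would use the mass formula \cite[Lemma 3.25]{Cox}: if $N(-864,w)$ denotes the number of primitive representations of $w$ by all twelve reduced forms of discriminant $-864$, then, since $-864$ and $-96$ differ by a perfect square and hence define the same quadratic character, $N(-864,p)=2\left(1+\left(\frac{-6}{p}\right)\right)$ for $p\geq 5$. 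As $X_{p,1}$ is contained in the set counted by $N(-864,p)$, its nonemptiness forces $\left(\frac{-6}{p}\right)=1$ and hence $N(-864,p)=4$; therefore $|X_{p,1}|=4$.

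The elementary inputs, namely the coprimality $\gcd(m,6)=1$ and the sign-change invariances, are routine, and the only substantial ingredient is the mass formula, which supplies the sharp upper bound of four in the case $m=1$. This is precisely the step that goes beyond elementary manipulation: it pins the count down exactly rather than merely up to sign, and it is where the failure of $216$ to be idoneal is sidestepped by appealing to the total representation number over all twelve classes, so I expect it to be the main obstacle.
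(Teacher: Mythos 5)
Your argument is correct, and for $m>1$ it is essentially the paper's proof: both use $|X_{p,m}|=2|A_m|$ from Lemma \ref{Lmain} and then show $4\mid |A_m|$ via the four sign changes, ruling out axis solutions exactly as you do (the paper states this more tersely as ``$a,b\neq 0$, hence $(a,b)\in A_m$ iff $(\pm a,\pm b)\in A_m$''). Where you genuinely diverge is the case $m=1$. The paper again just invokes Lemma \ref{Lmain}: with $j=1$ the set $A_1=\{(a,b):a^2+216b^2=1,\ \gcd(a,b)=1\}$ is visibly $\{(\pm 1,0)\}$, so $|X_{p,1}|=2|A_1|=4$ with no further input. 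You instead prove the two inequalities separately: the lower bound $|X_{p,1}|\geqslant 4$ from sign changes, and the upper bound from the mass formula $N(-864,p)=2\left(1+\left(\frac{-6}{p}\right)\right)=4$, using that $X_{p,1}$ sits inside the primitive representations by all twelve classes of discriminant $-864$. Your upper-bound step is sound (the characters attached to $-864$ and $-96$ agree away from $2$ and $3$ since $-864=9\cdot(-96)$, and nonemptiness forces $\left(\frac{-6}{p}\right)=1$), and it has the mild virtue of not relying on the two-to-one map of Lemma \ref{Lmain} for this case; but it imports the class-group mass formula where the paper needs only the trivially computed set $A_1$. Both routes are valid; the paper's is shorter and stays entirely within the machinery already set up.
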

\begin{proof} If $m=1$, then $p\equiv 1\pmod{24}$ and therefore $j=1$. Then $(1, 0)$ and  $(-1,0)$ are the only elements of $A_m$ and $|X_{p,m}|=2|A_m|=4$. If $m>1$ and  $(a,b)\in A_m$, it follows that $a, b\neq 0$. Then, $(a,b)\in A_m$ if and only if $(-a, -b), (-a, b), (a, -b)\in A_m$. Hence,  $|A_m|\equiv 0\pmod 4$, and by Lemma \ref{L_j=8}, $|X_{p,m}|\equiv 0 \pmod 8$.
\end{proof}

\subsubsection{Proof of Theorem \ref{Tmain}}

We  introduce the following notation. 
\begin{align*}
 M_1(w)& :=|\{(x,y) \in \mathbb Z^2:x^2+24y^2=w\}|\\
M_2(w)& :=|\{(x,y) \in \mathbb Z^2:x^2+24\cdot 9y^2=w\}|\\ N_1(w)& :=|\{(x,y) \in  M_1(w): {\gcd(x,y)=1}\}|\\
N_2(w)& :=|\{(x,y) \in  M_1(w):  \gcd(x,y)=1\}|. \end{align*}
{Note that if $u$ is square free, then $M_i(u)=N_i(u)$. }

From \cite[pg. 84]{D}, if $w\equiv 1 \pmod{24}$, we have \begin{equation} \label{M1} M_1(w)=2\sum_{d\mid w}\Big(\frac{-6}{d}\Big),\end{equation} where the sum is over all positive divisors of $w$  and $\big(\frac{-6}{p}\big)$ is the Jacobi symbol.  As discussed in the introduction, since $24$ is an idolean number,  if $w\equiv 1 \pmod{24}$,   we have \begin{equation}\label{N} N_1(w)= N(-96,w)=2\prod_{p\mid w}\left(1+\Big(\frac{-6}{p}\Big)\right),\end{equation} where the product is over all prime divisors of $w$.

In particular,  if $p\in \mathcal P$ and $p \equiv 1 \pmod{24}$, then $M_1(p)=N_1(p)=4$. 

\smallskip

Recall that $s=p^2n+p\alpha-24^{-1}_p$ and $24s+1=pm$ with $p\nmid m$. 
If $(x,y)\in \mathbb Z^2$, is a solution for $x^2+24y^2=24s+1$, then $x\neq 0$ and, since $24s+1=pm$ is not a perfect square, we also have  $y\neq 0$. Then, as in the proof of Corollary \ref{Cmain},   $M_i(pm)\equiv N_i(pm)\equiv 0\pmod 4$. 
 
 Since $pm$ is not a perfect square and $pm \equiv 1 \pmod{24}$, 
  to prove that  $b_3(2s)\equiv 0 \pmod 2$, we need to show that $$b_3(2s)\equiv a(s)\equiv \frac{1}{4}\Big(M_1(24s+1)-M_2(24s+1)\Big)\pmod 2.$$ Hence, we need to show that $$M_1(pm)-M_2(pm)\equiv 0\pmod 8.$$

 If $m\neq 1$, it follows from \eqref{N} that $N_1(pm)\equiv 0\pmod 8$. 
As previously mentioned, if $u$ is square free then $N_i(u)=M_i(u)$.
If $u=gh^2$ with $g$ square free, then $M_i(u)=\sum_{d|h}N_i(gd^2)$. \smallskip

 \noindent \underline{Case I:} If $m$ is not a perfect square, then $pm=pgh^2$ for some positive integers $h, g$ with $g$ square free. Then $$M_i(pm)=M_i(pgh^2)=\sum_{d|h}N_i(pgd^2).$$ Since $pm=pgh^2\equiv 1 \pmod{24}$, $\gcd(h,6)=1$ and $h^2\equiv 1 \pmod{24}$. Then $pg\equiv 1\pmod{24}$ and thus $pgd^2\equiv 1 \pmod{24}$ for all $d\mod h$.
 By Corollary \ref{Cmain}, $N_i(pgd^2)\equiv 0\pmod 8$ for all $d\mid h$ and thus $M_i(pm)\equiv 0 \pmod 8$.

 \smallskip

 \noindent \underline{Case II:} If $m$ is a perfect square, since $\gcd(m,6)=1$, we have $m\equiv 1 \pmod{24}$. Then $p \equiv 1 \pmod{24}$ and $N_1(p)=4$. We have  $$M_i(pm)=\sum_{\substack{d|h\\ d>1}}N_i(pd^2)+N_i(p).$$ By Corollary \ref{Cmain}, $N_2(pd^2)\equiv 0\pmod 8$ for all $d\mid m$, $d>1$,  and it follows that $M_i(pm)\equiv N_i(p) \pmod 8$.
 
Since $p\in \mathcal P$, $N_2(p)>0$. Since $N_2(p)\leqslant N_1(p)$ and $N_2(p)\equiv 0\pmod 4$, we have $N_1(p)=N_2(p)=4.$  Therefore $M_1(pm)-M_2(pm)\equiv 0 \pmod 8$. This concludes the proof. 

\subsubsection{The Dirichlet density of $\mathcal P$}

In this subsection we  determine the Dirichlet density of the set $\mathcal P$. Since the density is positive,  Theorem \ref{Tmain} holds for infinitely many primes. Moreover, we show that Theorem \ref{Tmain} holds for infinitely many primes congruent to $k \pmod{24}$ for each $k\in \{1,5,7,11\}$. 

\begin{definition} 
Let $\Sigma\subset \mathbb{N}$ be a subset of prime numbers. Then the Dirichlet density, $\delta(\Sigma)$, of $\Sigma$ is defined as 
$$\delta(\Sigma):=\lim_{s\to 1^{+}}\frac{\sum_{p\in\Sigma}p^{-s}}{\sum_{p}p^{-s}},$$
where the sum in the denominator is over all primes. 
\end{definition}
Using basic facts about the Riemann zeta function, it can be shown that $$\delta(\Sigma)=\lim_{s\to 1^{+}}\frac{\sum_{p\in\Sigma}p^{-s}}{-\log(s-1)}.$$

For  $k\in\{1,5,7,11\}$, let $\mathcal P_k=\{p\in \mathcal P: p\equiv k\pmod{24}\}.$

\begin{proposition} 
The Dirichlet density $\delta(\mathcal{P})=\frac{1}{6}.$ Furthermore $delta(\mathcal{P}_{k}))=\frac{1}{24}$ for $k\in\{1,5,7,11\}$.
\end{proposition}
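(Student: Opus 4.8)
The plan is to translate membership in each $\mathcal P_k$ into a statement about primes \emph{primitively represented by a single explicit binary quadratic form}, and then read off the densities from the theory of ring class fields.

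First I would use the congruence Lemmas \ref{L_j=1}, \ref{L_j=4}, \ref{L_j=8} to eliminate the auxiliary factor $j$. In the case $j=4$ the relation $4p=x^2+216y^2$ becomes $p=z^2+54y^2$ (with $x=2z$), and in the case $j=8$ the relation $8p=x^2+216y^2$ becomes $p=2z^2+27y^2$ (with $x=4z$); tracking the parities exactly as in the proofs of those lemmas shows the substitutions preserve primitivity. Consequently I expect to prove the clean identity
\begin{equation*}
\mathcal P=\{p : p=x^2+54y^2\}\ \sqcup\ \{p : p=2x^2+27y^2\},
\end{equation*}
the union being disjoint because the first set consists of primes $\equiv 1,7$ and the second of primes $\equiv 5,11\pmod{24}$. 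Refining by the value of $y$ (resp. the parity of the leading variable) I would further record $\mathcal P_1=\{p:p=x^2+216y^2\}$ and $\mathcal P_{11}=\{p:p=8x^2+27y^2\}$, together with $\{p:p=x^2+54y^2\}=\mathcal P_1\sqcup\mathcal P_7$ and $\{p:p=2x^2+27y^2\}=\mathcal P_5\sqcup\mathcal P_{11}$.

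The density input is the standard dictionary between primes represented by a form and Frobenius elements in a ring class field (see \cite{Cox}). For a positive definite primitive form $f$ of discriminant $D<0$ with ring class field $L$, the Chebotarev density theorem applied to the generalized dihedral group $\mathrm{Gal}(L/\mathbb{Q})$ gives that, away from the finitely many ramified primes, the set of primes represented by $f$ has Dirichlet density $\tfrac{1}{2h(D)}$ when $f$ is ambiguous (equal to its opposite class) and $\tfrac{1}{h(D)}$ otherwise. Here the forms $x^2+54y^2$ and $2x^2+27y^2$ have discriminant $-216$ with $h(-216)=6$, while $x^2+216y^2$ and $8x^2+27y^2$ have discriminant $-864$ with $h(-864)=12$; all four are orders in $\mathbb{Q}(\sqrt{-6})$ (of conductors $3$ and $6$), all four are ambiguous since $b=0$, and the first of each pair is principal. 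With these facts the computation is immediate. For the main claim, $x^2+54y^2$ (principal, $D=-216$) and $2x^2+27y^2$ (ambiguous, $D=-216$) each contribute density $\tfrac{1}{2\cdot 6}=\tfrac1{12}$, so $\delta(\mathcal P)=\tfrac1{12}+\tfrac1{12}=\tfrac16$. For the refined claim, $\mathcal P_1$ and $\mathcal P_{11}$ are cut out by a principal and an ambiguous form of discriminant $-864$, each of density $\tfrac{1}{2\cdot 12}=\tfrac1{24}$; subtracting along the two decompositions above then gives $\delta(\mathcal P_7)=\tfrac1{12}-\tfrac1{24}=\tfrac1{24}$ and $\delta(\mathcal P_5)=\tfrac1{12}-\tfrac1{24}=\tfrac1{24}$. (Alternatively one can identify $\mathcal P_7$ and $\mathcal P_5$ directly with the remaining ambiguous forms $4x^2+4xy+55y^2$ and $8x^2+8xy+29y^2$ of discriminant $-864$, the four ambiguous classes being exactly the four genera.)

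The step I expect to be the main obstacle is not the density bookkeeping but the first paragraph: making the passage from ``$jp$ is primitively represented by $x^2+216y^2$'' to ``$p$ is represented by a single form'' fully rigorous, and in particular separating the residue classes $5$ and $11$ (resp. $1$ and $7$), which share a genus. This separation is precisely what forces the use of \emph{two} discriminants: the classes $5,11\pmod{24}$ both lie in the image of the conductor-$3$ form $2x^2+27y^2$ of discriminant $-216$, and only after passing to the conductor-$6$ discriminant $-864$ (equivalently, to the larger ring class field containing the conductor-$3$ field) can one peel $\mathcal P_{11}$ off from $\mathcal P_5$. Checking that the substitutions, the parity constraints from Lemma \ref{L_j=8}, and the ambiguity and principality claims all line up is the crux; once they do, the density statement follows in one line from Chebotarev.
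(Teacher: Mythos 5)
Your proposal is correct and follows essentially the same route as the paper: the paper likewise reduces each $\mathcal P_k$ to primes represented by the single forms $x^2+216y^2$, $x^2+54y^2$ (with $y$ odd, i.e.\ minus the $x^2+216y^2$ primes), $2x^2+27y^2$ and $8x^2+27y^2$, and then reads off the densities $\frac{1}{2h(-864)}=\frac1{24}$ and $\frac{1}{2h(-216)}=\frac1{12}$ from \cite[Theorem 9.12]{Cox}, which is exactly the Chebotarev-on-the-ring-class-field statement you invoke. The only cosmetic difference is that you spell out the ambiguity/principality bookkeeping that Cox's theorem packages for you.
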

\begin{proof} 

By Lemma \ref{L_j=1}, 
$$\mathcal{P}_1=\{p:\text{there exist $x,y\in\mathbb{Z}$ such that $x^2+216y^2=p$}\}.$$ 
By \cite[Theorem 9.12]{Cox} we have $$\delta(\mathcal{P}_1)=\frac{1}{2h(-864)}=\frac{1}{24}.$$

If there are $x,y\in\mathbb{Z}$ such that $\gcd(x,y)=1$ and $x^2+216y^2=4p$, then $x=2z$ for some $z\in \mathbb Z$,  and  $z^2+54y^2=p$. However if $(z,y)$ is a solution to $z^2+54y^2=p$, then indeed $(x,y)=(2z,y)$ is a solution to $x^2+216y^2=4p$ but we might have $\gcd(x,y)\neq 1$. It might be that $2|y$, in this case $z^2+216(y/2)^2=p$.  
Therefore,
\begin{align*}
\mathcal{P}_7=&\{p:\text{there exist $z,y\in\mathbb{Z}$ such that $z^2+54y^2=p$, $2\nmid y$}\}\\
=&\{p:\text{there exist $z,y\in\mathbb{Z}$ such that $z^2+54y^2=p$}\}\\ 
&- \{p:\text{there exist $x,y\in\mathbb{Z}$ such that $x^2+216y^2=p$}\}
\end{align*}
and by \cite[Theorem 9.12]{Cox} we have $$\delta(\mathcal{P}_7)=\frac{1}{2h(-216)}-\frac{1}{2h(-864)}=\frac{1}{24}.$$
Similarly, 
$$\mathcal{P}_5\cup \mathcal{P}_{11}=\{p:\text{there exist $z,y\in\mathbb{Z}$ such that $2z^2+27y^2=p$}\},$$
and by \cite[Theorem 9.12]{Cox} we have $$\delta(\mathcal{P}_5\cup\mathcal{P}_{11})=\frac{1}{2h(-216)}=\frac{1}{12}.$$
If $p\equiv 11 \pmod{24}$ and $z,y$ satisfy $2z^2+27y^2=p$, then $z$ must be even. Hence,  
$$\mathcal{P}_{11}=\{p:\text{there exist $z,y\in\mathbb{Z}$ such that $8z^2+27y^2=p$}\},$$
and by \cite[Theorem 9.12]{Cox} we have $$\delta(\mathcal{P}_{11})=\frac{1}{2h(-864)}=\frac{1}{24}.$$
This implies that $$\delta(\mathcal{P}_5)=\delta(\mathcal{P}_5\cup \mathcal{P}_{11})-\delta(\mathcal{P}_{11})=\frac{1}{12}-\frac{1}{24}=\frac{1}{24}.$$
Therefore, 
$$\delta(\mathcal{P})=\delta(\mathcal{P}_1)+\delta(\mathcal{P}_7)+\delta(\mathcal{P}_5\cup \mathcal{P}_{11})=\frac{1}{24}+\frac{1}{24}+\frac{1}{12}=\frac{1}{6}.$$
\end{proof}

Hence, for each $k\in \{1,5,7,11\}$, there are infinitely many primes congruent to $k$ modulo $24$  satisfying the congruences of Theorem \ref{Tmain}.

We conclude this section by showing that for $p\in \mathcal P$ we have $b_3\left( \frac{p^2-1}{12} \right) \equiv 1 \pmod 2$, which was noted in Remark \ref{R2}.

\begin{proposition} \label{Pp^2} A prime $p$ belongs to $\mathcal P$ if and only if the Diophantine equation $x^2+216y^2=p^2$ has primitive solutions.
\end{proposition}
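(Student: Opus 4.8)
The plan is to prove Proposition \ref{Pp^2} by establishing both implications separately, leveraging the structure of the quadratic form $x^2+216y^2$ and the results already proved in Lemmas \ref{L_j=1}--\ref{L_j=8}.

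\medskip

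\emph{Forward direction.} Suppose $p\in\mathcal P$, so there is $j\in\{1,4,8\}$ for which $x^2+216y^2=jp$ has a primitive solution $(x_1,y_1)$. The key observation is that the Gaussian-integer-style norm form $N(x+y\sqrt{-216})=x^2+216y^2$ is multiplicative, so from a primitive representation of $jp$ one can manufacture a representation of $(jp)^2=j^2p^2$ by composing the form with itself. Concretely, the Brahmagupta identity gives
$$(x_1^2+216y_1^2)^2=(x_1^2-216y_1^2)^2+216(2x_1y_1)^2,$$
so $(x_1^2-216y_1^2)^2+216(2x_1y_1)^2=j^2p^2$. I would then divide through by $j^2$ (handling $j=1,4,8$ separately, exactly as in the proof of Lemma \ref{Lmain}, where the $2$-adic valuations of $x_1$ established in Lemma \ref{L_j=8} guarantee that the resulting $a:=(x_1^2-216y_1^2)/j$ and $b:=2x_1y_1/j$ are integers) to obtain $a^2+216b^2=p^2$. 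The remaining task is to check $\gcd(a,b)=1$; this follows by the same valuation bookkeeping used in Lemma \ref{Lmain} together with the fact that any prime dividing both $a$ and $b$ would have to divide $p$, and one rules out $p\mid\gcd(a,b)$ using primitivity of $(x_1,y_1)$.

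\medskip

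\emph{Reverse direction.} Suppose $x^2+216y^2=p^2$ has a primitive solution. If $y=0$ then $p^2=x^2$ forces $x=\pm p$, giving the trivial representation, but primitivity with $y=0$ requires $x=\pm1$, contradiction; so $y\neq0$, hence $x\neq\pm p$ and the representation is genuinely nontrivial. The idea is to reverse the composition: a nontrivial primitive representation of $p^2$ by the principal form of discriminant $-864$ must ``factor'' through a representation of $p$ by one of the forms in the principal genus, because in the ideal-class picture the class of the prime $\mathfrak p$ above $p$ squares to the principal class (as $p^2$ is represented by the principal form), so $\mathfrak p$ itself has order dividing $2$ and lies in the principal genus. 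Translating back via Lemmas \ref{L_j=1}--\ref{L_j=8}, this is exactly the statement that $x^2+216y^2=jp$ has a primitive solution for some $j\in\{1,4,8\}$, i.e.\ $p\in\mathcal P$.

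\medskip

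I expect the main obstacle to be the reverse direction, and specifically making rigorous the claim that a nontrivial primitive representation of $p^2$ descends to a primitive representation of $jp$. The composition identity is easy to run forward but not obviously reversible at the level of primitive solutions, since dividing a representation of $p^2$ by $p$ need not preserve integrality or coprimality. I would handle this by working with the prime $\mathfrak p$ lying over $p$ in the order of discriminant $-864$: the hypothesis says $\mathfrak p^2$ is principal, so the class $[\mathfrak p]$ has order $1$ or $2$ and therefore lies in the $2$-torsion of the class group, which coincides with the principal genus; then the genus-theoretic correspondence (using that the forms $x^2+216y^2$, $z^2+54y^2$ and $2z^2+27y^2$ represent the relevant genera, as recorded in the density computation above) yields a primitive representation of $jp$. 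An alternative, more self-contained route avoiding ideal theory would be to argue directly: from $a^2+216b^2=p^2$ with $b\neq0$, set $d=\gcd(a,p)$ and analyze the two cases $d=1$ and $d=p$ using the uniqueness of primitive representations up to the action of the automorphisms of the form, but I anticipate the ideal-theoretic argument is cleaner.
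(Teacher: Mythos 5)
Your forward direction is exactly the paper's argument: compose the primitive representation of $jp$ with itself via the Brahmagupta identity and divide by $j$, using the $2$-adic information from Lemma \ref{L_j=8} to keep everything integral and coprime. The problem is the reverse direction, which is where all the content of the proposition lies, and there your argument has a genuine error. From $\mathfrak p^2$ principal you correctly get that $[\mathfrak p]$ has order dividing $2$, i.e.\ $[\mathfrak p]$ is an ambiguous class, but it does \emph{not} follow that $[\mathfrak p]$ lies in the principal genus: by Gauss the principal genus is the subgroup $C^2$ of squares, whereas the ambiguous classes form the $2$-torsion $C[2]$, and these coincide only when the odd part of $C$ is trivial. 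For discriminant $-864$ one has $h(-864)=12$ with four genera of three classes each, so $C\cong\mathbb Z/2\times\mathbb Z/6$, the principal genus is the order-$3$ subgroup $C^2$, and $C[2]$ has order $4$ and meets $C^2$ trivially. So a class of order $2$ is never in the principal genus here. Even after correcting the statement to ``$[\mathfrak p]$ is ambiguous,'' you would still have to show that the three nontrivial ambiguous classes correspond precisely to primitive representations of $4p$ and $8p$ by the principal form $x^2+216y^2$; Lemmas \ref{L_j=1}--\ref{L_j=8} cannot do this, since they only give \emph{necessary} congruence conditions on $m$ for $jm$ to be represented, not representability itself.

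The paper's reverse direction avoids this entirely by descending to idoneal discriminants. From a primitive solution of $x^2+216y^2=p^2$ it forms $(X,Y)=(x,3y)$ with $X^2+24Y^2=p^2$, which forces $\bigl(\tfrac{-6}{p}\bigr)=1$ and hence $p\equiv 1,5,7,11\pmod{24}$. In each case $p$ is primitively represented by the unique relevant form of discriminant $-96$ or $-24$ (here idoneality of $24$ and $6$ and the mass formula \eqref{N} give $N_1(p)=4$, resp.\ $N_3(p)=4$ or $N_4(p)=4$); squaring that representation produces, up to signs, \emph{the} primitive representation $(\pm u,\pm v)$ of $p^2$, so $(X,Y)=(x,3y)$ must equal one of these, and the condition $3\mid Y$ forces $3\mid b$ in the original representation of $p$, which then descends to a primitive solution of $x^2+216y^2=jp$. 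That explicit uniqueness-plus-divisibility step is the idea your proposal is missing, and your ``more self-contained'' alternative (casework on $\gcd(a,p)$ plus uniqueness of primitive representations of $p^2$ by $x^2+216y^2$ up to automorphism) also cannot be run as stated, because for the non-idoneal discriminant $-864$ such uniqueness for a single form is not available from genus theory.
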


\begin{proof}
 If $(x_1, y_1)$ is a primitive solution for $x^2+216y^2=jp$, where $j=1, 4, 8$, an easy calculation shows that \begin{align*}x'& := \frac{x_1^2-216y_1^2}{j}\\ y'& := \frac{2x_1y_1}{j}\end{align*} is a primitive solution  for $x^2+216y^2=p^2$. 
 
Conversely, let $x,y$ be coprime integers such that $x^2+216y^2=p^2$.  Then the pair $(X,Y):=(x,3y)$ is such that $X^2+24Y^2=p^2$ and $\gcd(X,Y)=1$. 

Suppose $p\equiv 1\pmod{24}$. Since $p\equiv 1\pmod{24}$, we have that $N_1(p)=4$. So, there exist  $a,b\in \mathbb Z$ such that  $a^2+24b^2=p$ and $\gcd(a,b)=1$. This implies that the pair $(u,v)$ defined by 
$$(u+2\sqrt{-6}v):=(a+2\sqrt{-6}b)^2=a^2-24b^2+2\sqrt{-6}(2ab)$$ is such that  $u^2+24v^2=p^2$. Since $\gcd(a,b)=1$ and $\gcd(a,6)=1$, it follows that $\gcd(u,v)=1. $
Then,  $(x,3y)=(X,Y)\in\{(\pm u,\pm v)\}.$
Since $v=2ab$, we must have $ab\equiv 0\pmod{3}$. Since $3\nmid a$, we have $3|b$.  Then, we can write $b=3c$ for some $c\in \mathbb Z$. This implies $a^2+216c^2=p$ and $(a, c)$ is a primitive solution for $x^2+216y^2=jp$ with $j=1$.

Next, we consider  the case $p\equiv 5,7,11\pmod{24}$. 
 For $\gcd(w,6)=1$,  the number of primitive representations of $w$ by a positive, reduced, primitive quadratic form of discriminant $-24$ is 
 $$N(-24,w)=2\prod_{p|w}\left(1+\left(\frac{-6}{p}\right)\right).$$
 Since $h(-24)=2$, we have two quadratic forms of discriminant $-24$: $x^2+6y^2$ and $2x^2+3y^2$.  Therefore,
\begin{align*}
      N(-24,w)=&|\{(x,y)\in\mathbb{Z}:x^2+6y^2=w,\gcd(x,y)=1\}|\\
       &+|\{(x,y)\in\mathbb{Z}:2x^2+3y^2=w,\gcd(x,y)=1\}|.
  \end{align*}
We  define
\begin{align*}N_3(w)& :=|\{(x,y)\in\mathbb{Z}:x^2+6y^2=w,\gcd(x,y)=1\}|\\
N_4(w)& :=|\{(x,y)\in\mathbb{Z}:2x^2+3y^2=w,\gcd(x,y)=1\}|.\end{align*}

 We note that $6$ is an idoneal number. If $w\equiv 1,7\pmod{24}$ then only $x^2+6y^2=w$ has solutions and so $N(-24,w)=N_3(w)$, and if $w\equiv 5,11\pmod{24}$ then only $2x^2+3y^2=w$ has solutions and so $N(-24,w)=N_4(w)$. 

  First let $p\equiv 7\pmod{24}$. 
  We have $N_3(p)=4$. Hence, there exist  $a,b\in \mathbb Z$ such that  $a^2+6b^2=p$ and $\gcd(a,b)=1$.   This implies that the pair $(u,v)$ defined by 
 $$u+2\sqrt{-6}v:=(a+\sqrt{-6}b)^2=a^2-6b^2+2\sqrt{-6}ab$$ is such that $u^2+24v^2=p^2$. Moreover, $\gcd(u,v)=1$.
 Then, $(x,3y)=(X,Y)\in \{(\pm u,\pm v)\}.$
 Since $v=ab$ we must have $ab\equiv 0\pmod{3}$. Since $3\nmid a$ we have $3|b$. So we can write $b=3c$ for some $c\in \mathbb Z$. This implies $a^2+54c^2=p$ or, equivalently, $(2a)^2+216c^2=4p$. Next note that $2\nmid c$ because of $p\equiv 7\pmod{24}$, which implies $\gcd(2a,c)=1$. Hence, $(2a, c)$ is a primitive solution for $x^2+216y^2=jp$ with $j=4$.
  
  Finally, let $p\equiv 5,11\pmod{24}$.  
  We have $N_4(p)=4$. Hence, there exist  $a,b\in \mathbb Z$ such that  $2a^2+3b^2=p$ and $\gcd(a,b)=1$. This implies that the pair $(u,v)$ defined by 
 $$u+2\sqrt{-6}v:=(\sqrt{2}a+\sqrt{-3}b)^2=2a^2-3b^2+2\sqrt{-6}ab$$ is such that $u^2+24v^2=p^2$. Moreover, $\gcd(u,v)=1$.
 Then, $(x,3y)=(X,Y)\in \{(\pm u,\pm v)\}.$
 Since $v=ab$ we must have $ab\equiv 0\pmod{3}$. Since $3\nmid a$ we have $3|b$. So we can write $b=3c$ for some $c\in \mathbb Z$. This implies $2a^2+27c^2=p$ or, equivalently, $(4a)^2+216c^2=8p$. Hence, $(4a, c)$ is a primitive solution for $x^2+216y^2=jp$ with $j=8$.
  
\end{proof}

\begin{corollary} If $p\in \mathcal P$, then $b_3\left( \frac{p^2-1}{12} \right) \equiv 1 \pmod 2$.
\end{corollary}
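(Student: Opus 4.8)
The plan is to treat directly the perfect-square case $24n+1=p^2$ that is deliberately excluded from Theorem \ref{Tmain} (it corresponds to the forbidden value $\alpha=\lfloor p/24\rfloor$, by Lemma \ref{L2}). Since $\frac{p^2-1}{12}=2n$ with $n=\frac{p^2-1}{24}$, and $p^2\equiv 1\pmod{24}$ guarantees $n\in\mathbb Z$, it suffices to show $a(n)\equiv 1\pmod 2$, where $24n+1=p^2$ and, as in subsection \ref{2.2}, $a(n)=|\{(x,y)\in\mathbb N^2:x^2+24y^2=p^2,\ y=0\text{ or }3\nmid y\}|$. First I would isolate the contribution of $y=0$: the only nonnegative solution with $y=0$ is $(p,0)$, contributing exactly $1$. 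This lone solution, absent in the non-square case of the main proof, is precisely the source of the odd parity.

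For the solutions with $y>0$ and $3\nmid y$ I would pass to $\mathbb Z^2$. No solution of $x^2+24y^2=p^2$ has $x=0$, and the only ones with $y=0$ are $(\pm p,0)$; hence the solutions with $y\neq 0$ occur in sign-orbits of size $4$. Writing $y=3y'$ identifies the solutions with $3\mid y$ and $y\neq 0$ with the solutions of $x^2+216y'^2=p^2$ having $y'\neq 0$. This yields
\[
4\cdot|\{(x,y)\in\mathbb N^2:x^2+24y^2=p^2,\ y>0,\ 3\nmid y\}|=\big(M_1(p^2)-2\big)-\big(M_2(p^2)-2\big),
\]
so that $a(n)=1+\tfrac14\big(M_1(p^2)-M_2(p^2)\big)$, and the task reduces to computing $M_1(p^2)$ and $M_2(p^2)$.

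Next I would evaluate $M_1(p^2)$ from \eqref{M1}: since $p^2\equiv 1\pmod{24}$ and the divisors of $p^2$ are $1,p,p^2$, one gets $M_1(p^2)=2\big(2+\big(\frac{-6}{p}\big)\big)$. As every $p\in\mathcal P$ satisfies $p\equiv 1,5,7,11\pmod{24}$, we have $\big(\frac{-6}{p}\big)=1$, whence $M_1(p^2)=6$. For $M_2(p^2)$ I would use the factorization $M_2(p^2)=N_2(1)+N_2(p^2)=2+N_2(p^2)$. On one hand, Proposition \ref{Pp^2} gives $N_2(p^2)>0$, and since no primitive solution of $x^2+216y^2=p^2$ has $x=0$ or $y=0$, such solutions fall into sign-orbits of size $4$, forcing $N_2(p^2)\geqslant 4$. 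On the other hand, the injection $(x,y)\mapsto(x,3y)$ sends solutions of $x^2+216y^2=p^2$ to solutions of $x^2+24y^2=p^2$, so $M_2(p^2)\leqslant M_1(p^2)=6$ and hence $N_2(p^2)\leqslant 4$. Squeezing gives $N_2(p^2)=4$ and $M_2(p^2)=6$, so $a(n)=1+\tfrac14(6-6)=1$ and $b_3\big(\tfrac{p^2-1}{12}\big)=b_3(2n)\equiv a(n)\equiv 1\pmod 2$.

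I expect the only delicate point to be the exact determination of $N_2(p^2)$: because $216$ is not idoneal, one cannot read this off a genus/congruence separation as in the squarefree case. The argument sidesteps this obstruction by using Proposition \ref{Pp^2} solely for positivity (giving $N_2(p^2)\geqslant 4$) and the elementary injection $(x,y)\mapsto(x,3y)$ for the matching upper bound (giving $N_2(p^2)\leqslant 4$). This squeeze, combined with the extra $+1$ coming from the genuine $y=0$ representation $(p,0)$ that appears only in the perfect-square case, is exactly what makes the parity odd rather than even.
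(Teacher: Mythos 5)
Your proof is correct and follows essentially the same route as the paper: both arguments rest on $M_1(p^2)=6$ (via \eqref{M1} and $\big(\tfrac{-6}{p}\big)=1$ for $p\in\mathcal P$) together with Proposition \ref{Pp^2}, which forces the unique sign-orbit of solutions of $x^2+24y^2=p^2$ with $y\neq 0$ to have $3\mid y$, leaving only $(p,0)$ to contribute to $a\big(\tfrac{p^2-1}{24}\big)$. Your squeeze $4\leqslant N_2(p^2)\leqslant 4$ is just a more explicit bookkeeping of the same step.
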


\begin{proof} To show that for $p\in \mathcal P$ we have $b_3\left( \frac{p^2-1}{12} \right) \equiv 1 \pmod 2$,  we need to show that $x^2+24y^2=p^2$ has an odd number of solutions $(x,y)\in \mathbb N^2$ such that $y=0$ or $3\nmid y$. 
If $p\in \mathcal P$, then 
 $-6$ is a quadratic residue modulo $p$ and from \eqref{M1} we have $M_1(p^2)=6$. Of the six solution for $x^2+24y^2=p^2$,  there are only two solutions in $\mathbb N^2$, one of which is $(p,0)$. By Proposition \ref{Pp^2}, the other solution $(x,y)$ satisfies $y\neq 0$ and $3\mid y$. It follows that  $a\left( \frac{p^2-1}{24} \right)=1$ and thus
 $b_3\left( \frac{p^2-1}{12} \right) \equiv 1 \pmod 2$. 
  \end{proof}

\section{Reversals of Euler type theorems}\label{SL}

If $S$ is a subset of positive integers, we denote by $p(S;n)$ the number of partitions of $n$ with parts from $S$. If $k\geqslant 2$, we denote  by $q_k(S;n)$ the number of partitions of $n$ with parts from $S$, each part occurring at most $k-1$ times. If $k=2$, we write $q(S;n)$ for $q_2(S;n)$. Thus, $q(S;n)$ is the number of distinct partitions of $n$ with parts from $S$. 

A pair  $(S_1, S_2)$ of subsets of positive integers is called an Euler pair if $q(S_1; n)=p(S_2;n)$ for all $n\geqslant 0$. 
 Andrews \cite{A69}  proved that $(S_1, S_2)$ is an Euler pair if and only if $2S_1\subseteq S_1$ and $S_2=S_1\setminus 2S_1$.
 
 This notion has been generalized by Subbarao \cite{S71}. Given $k \geqslant 2$, the pair $(S_1, S_2)$ is called an Euler pair of order $k$ if $q_k(S_1;n)=p(S_2;n)$ for all $n\geqslant 0$. 
  Subbarao proved that  $(S_1, S_2)$ is  an Euler pair of order $k$ if and only if $kS_1\subseteq S_1$ and $S_2=S_1\setminus kS_1$.

\begin{example}[Subbarao \cite{S71}]\label{eg:epair}
Let 
\begin{align*}
S_1 &= \{ m \in \N : \ m \equiv 1  \pmod  2 \};\\
S_2 &= \{ m \in \N : \ m \equiv \pm 1  \pmod  6 \}.
\end{align*}

\noindent Then $(S_1, S_2)$ is an Euler pair of order 3.
\end{example}

We note that Glaisher's bijection  used to prove Euler's identity $q(\N, n)=p(2\N-1, n)$ can be generalized to prove the corresponding partition identity for any Euler pair of order $r$. 

Next, we consider a reversal of the partition identity given by the Euler pair $(S_1, S_2)$. We denote by $p_e(S;n)$, respectively $p_o(S;n)$, the number of partitions of $n$ with parts in $S$ and an even, respectively odd, number of parts. We use analogous notation for other partition numbers.

\begin{theorem} \label{rev} Let $(S_1, S_2)$ be an Euler pair. For $n\geqslant 0$, we have $$q_e(S_2;n)-q_o(S_2;n)=p_e(S_1;n)-p_o(S_1,n).$$
\end{theorem}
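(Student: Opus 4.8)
The plan is to prove the identity analytically by recognizing both sides as the coefficients of a single formal power series, and then to outline a combinatorial (involution) argument. First I would record the two signed generating functions. Assigning to each partition the weight $(-1)$ raised to its number of parts, a distinct partition with parts in $S_2$ contributes a factor $1-q^s$ for each part $s$ it uses, so
$$\sum_{n\geqslant 0}\bigl(q_e(S_2;n)-q_o(S_2;n)\bigr)q^n=\prod_{s\in S_2}(1-q^s).$$
An ordinary partition with parts in $S_1$ uses each $s$ with some multiplicity $m\geqslant 0$ and is weighted by $(-1)^m$, whence
$$\sum_{n\geqslant 0}\bigl(p_e(S_1;n)-p_o(S_1;n)\bigr)q^n=\prod_{s\in S_1}\ \sum_{m\geqslant 0}(-1)^mq^{sm}=\prod_{s\in S_1}\frac{1}{1+q^s}.$$
Both are legitimate formal power series since every part is a positive integer, so the statement reduces to the product identity $\prod_{s\in S_1}(1+q^s)^{-1}=\prod_{s\in S_2}(1-q^s)$.

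To establish this identity I would use the factorization $\frac{1}{1+q^s}=\frac{1-q^s}{1-q^{2s}}$, which gives
$$\prod_{s\in S_1}\frac{1}{1+q^s}=\frac{\prod_{s\in S_1}(1-q^s)}{\prod_{s\in S_1}(1-q^{2s})}=\frac{\prod_{s\in S_1}(1-q^s)}{\prod_{s\in 2S_1}(1-q^s)}.$$
By Andrews' characterization \cite{A69}, an Euler pair satisfies $2S_1\subseteq S_1$ and $S_2=S_1\setminus 2S_1$, so the denominator cancels exactly the numerator factors indexed by $2S_1$, leaving $\prod_{s\in S_1\setminus 2S_1}(1-q^s)=\prod_{s\in S_2}(1-q^s)$. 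Comparing coefficients of $q^n$ then yields the theorem. An equivalent and perhaps more transparent route uses the disjoint decomposition $S_1=\bigsqcup_{i\geqslant 0}2^iS_2$, which follows by iterating $S_1=S_2\sqcup 2S_1$ together with the fact that no positive integer is divisible by arbitrarily large powers of $2$, combined with the telescoping identity $\prod_{i\geqslant 0}(1+q^{2^it})^{-1}=1-q^t$.

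For the combinatorial proof I would realize the cancellation above by a sign-reversing involution on the partitions with parts in $S_1$ counted on the right, whose fixed points are matched in a parity-preserving fashion with the distinct partitions with parts in $S_2$. The decomposition $S_1=\bigsqcup_{i\geqslant 0}2^iS_2$ lets me treat each $s_2\in S_2$ together with its dilates $2^is_2$ independently, so the global involution factors through the one-variable cancellation $\prod_{i\geqslant 0}(1+q^{2^it})^{-1}=1-q^t$; here the binary expansion of the relevant multiplicities supplies the pairing that kills all but the single surviving term. I expect the main obstacle to be purely combinatorial bookkeeping: making the local involution globally well defined and genuinely parity-reversing, and verifying that the residual fixed points correspond bijectively, and with the correct parity, to the distinct $S_2$-partitions. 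The analytic argument, by contrast, has no real obstacle beyond correctly setting up the two signed generating functions and invoking Andrews' structure theorem.
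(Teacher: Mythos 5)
Your analytic argument is correct and is essentially identical to the paper's: both rewrite $\prod_{s\in S_1}(1+q^s)^{-1}$ as $\prod_{s\in S_1}(1-q^s)\prod_{s\in 2S_1}(1-q^s)^{-1}$ and cancel using Andrews' characterization $2S_1\subseteq S_1$, $S_2=S_1\setminus 2S_1$; this alone proves the theorem. Where you diverge is in the combinatorial part. The paper follows Gupta: it restricts to partitions with parts in $S_1$ having a repeated part or a part in $2S_1$, and defines a single global involution that either merges two copies of the largest repeated part $r$ into $2r$ (when $2r>e$) or splits the largest part $e\in 2S_1$ into two copies of $e/2$ (when $2r\leqslant e$); the fixed points are exactly the distinct partitions into parts of $S_2$, with the correct sign. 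You instead propose a Glaisher-style local cancellation organized along the decomposition $S_1=\bigsqcup_{i\geqslant 0}2^iS_2$ and the identity $\prod_{i\geqslant 0}(1+q^{2^it})^{-1}=1-q^t$, pairing partitions via binary expansions of multiplicities. That decomposition is valid (iterate $S_1=S_2\sqcup 2S_1$ and note no integer lies in $2^kS_1$ for $2^k$ large), and the approach would work, but as you acknowledge you have only outlined it: the local pairings must be assembled into a single well-defined, sign-reversing involution, which is precisely the bookkeeping Gupta's ``largest repeated part versus largest doubled part'' rule handles cleanly. The trade-off is that your route makes the connection to Glaisher's bijection transparent, while the paper's route yields a shorter, fully explicit involution. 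Since your analytic proof is complete, the theorem is established either way.
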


\begin{proof}[Analytic proof]

We have
\begin{align*} \sum_{n\geqslant 0}(p_e(S_1;n)-p_o(S_1,n))q^n& =\prod_{j\in S_1}\frac{1}{1+q^j} =\prod_{j\in S_1}\frac{1-q^j}{1-q^{2j}}\\ & =\prod_{j\in S_1}(1-q^j) \prod_{j\in 2S_1}\frac{1}{1-q^{j}}\\ & = \prod_{j\in S_1}(1-q^j) \prod_{j\in S_1\setminus S_2}\frac{1}{1-q^{j}}\\ & = \prod_{j\in S_2}(1-q^j) \end{align*}
This completes the proof. 
\end{proof}

\begin{proof}[Combinatorial proof] Our combinatorial proof of this result is similar to that in \cite{G75}. Since $(S_1, S_2)$ is an Euler pair, it follows that $2S_1\subseteq S_1$ and $S_2=S_1\setminus 2S_1$. Given a partition $\lambda$ with parts in $S_1$, denote by $\ell(\lambda)$ the number of parts in $\lambda$ and by $\ell_2(\lambda)$ the number of even parts  from $2S_1$. 

Let $\mathcal P'(S_1, n)$ be the set of partitions $\lambda$ of $n$ with parts in $S_1$ such that $\lambda$ has  at least one repeated part or at least one  part from $2S_1$ and denote by $\mathcal P'_e(S_1, n)$, respectively $\mathcal P'_o(S_1, n)$, the subset of partitions in $\mathcal P'(S_1, n)$ with $\ell_2(\lambda)$ even, respectively odd. We define an involution $\varphi$ on $\mathcal P'(S_1, n)$ that reverses the parity of $\ell_2(\lambda)$.

Start with $\lambda\in \mathcal P'(S_1, n)$. Following Gupta's notation \cite{G75}, we denote by $r$ the largest repeated part of $\lambda$ and by $e$ the largest part of $\lambda$ that is from $2S_1$. If $r$ or $e$ do not exist, we set them equal to $0$. 
\begin{enumerate}
\item If $2r>e$,  we define $\varphi(\lambda)$ to be the partition obtained from $\lambda$ by replacing two parts equal to $r$ by a single part equal to $2r$. 

\item If $2r\leqslant e$, we define $\varphi(\lambda)$ to be the partition obtained from $\lambda$ by replacing one part equal to $e$ by  two  parts equal to $e/2$. 
\end{enumerate}

Since $\varphi: \mathcal P'(S_1, n)\to \mathcal P'(S_1, n)$ is an involution that reverses the parity of $\ell_2(\lambda)$,  we have  that $|\mathcal P'_e(S_1, n)|=|\mathcal P'_o(S_1, n)|$. This proves that $$q_e(S_2;n)-q_o(S_2;n)=  p_e(S_1, n)-p_o(S_1, n).$$
\end{proof}

\begin{corollary}\label{cor-rev} Let $(S_1, S_2)$ be an Euler pair such that $S_2\subseteq 2\N-1$. For $n\geqslant 0$, we have $$(-1)^nq(S_2;n)=p_e(S_1;n)-p_o(S_1,n).$$

\end{corollary}

\begin{proof} If $S_2$ consists of odd integers and $\lambda$ has parts in $S_2$, then $\ell(\lambda)\equiv n \pmod 2$. Then, $q_e(S_1;n)-q_o(S_1;n)=(-1)^n q(S_1;n)$. \end{proof}

\begin{remark} It is possible to have an Euler pair $(S_1, S_2)$  such that $S_2\not\subseteq 2\N-1$. For example, $S_1=\{m\in \N \mod m\equiv 2,4,5 \pmod 6\}$ and $S_2=\{m\in \N \mid m\equiv 2,5,11 \pmod{12}\}$ is an Euler pair and $q(S_1; n)=p(S_2;n)$ is G\"ollnitz's identity. 
\end{remark}

\begin{remark} \label{RSchur} If $S_1=\{n\in \mathcal N \mid n \equiv \pm 1\pmod 3\}$ and $S_2=\{n\in \mathcal N \mid n \equiv \pm 1\pmod 6\}$, then $(S_1, S_2)$ is and Euler pair and $q(S_1;n)=p(S_2;n)$ is Schur's identity.  Thus, our combinatorial proof of Theorem \ref{rev} gives a combinatorial proof of \cite[Theorem 1.1]{M22}.
Moreover, in this case $p(S_1;n)=b_3(n)$ and $S_2 \subseteq 2\N-1$. Thus, by Corollary \ref{cor-rev}, $b_3(n)\equiv q(S_2;n) \pmod 2$. \end{remark}



\begin{theorem}
	Let $r$ be an odd integer. For $n\geqslant0$, 
	$$
	q_r(\mathbb{N};n) \equiv q(M-r\,M;n) \pmod 2,
	$$
	where $M=2\mathbb{N}-1$,
\end{theorem}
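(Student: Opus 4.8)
The plan is to prove the congruence by comparing the two generating functions in $\mathbb F_2[[q]]$, where the characteristic-$2$ Frobenius (squaring) identity will force them to coincide. First I would record each side as an infinite product. Since $q_r(\mathbb N;n)$ counts partitions of $n$ in which every part occurs at most $r-1$ times, its generating function is
$$\sum_{n\geqslant 0} q_r(\mathbb N;n)\,q^n=\prod_{k=1}^\infty\bigl(1+q^k+\cdots+q^{(r-1)k}\bigr)=\prod_{k=1}^\infty\frac{1-q^{rk}}{1-q^k}=\frac{(q^r;q^r)_\infty}{(q;q)_\infty}.$$
For the other side I first note that because $r$ is odd, $rM$ is exactly the set of odd multiples of $r$, so $S:=M-rM$ is the set of odd positive integers not divisible by $r$, with generating function $\prod_{k\in S}(1+q^k)$.

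The key step will be to reduce this second product modulo $2$ and factor it through the standard products. Using $1+q^k\equiv 1-q^k\pmod 2$ and separating the odd factors from those that are additionally divisible by $r$, I would write
$$\prod_{k\in S}(1+q^k)\equiv\frac{\prod_{k\text{ odd}}(1-q^k)}{\prod_{m\text{ odd}}(1-q^{rm})}\pmod 2.$$
Splitting $(q;q)_\infty$ into its odd- and even-indexed factors gives $\prod_{k\text{ odd}}(1-q^k)=(q;q)_\infty/(q^2;q^2)_\infty$, and the substitution $q\mapsto q^r$ yields $\prod_{m\text{ odd}}(1-q^{rm})=(q^r;q^r)_\infty/(q^{2r};q^{2r})_\infty$. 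Hence the right-hand generating function is congruent modulo $2$ to $\dfrac{(q;q)_\infty\,(q^{2r};q^{2r})_\infty}{(q^2;q^2)_\infty\,(q^r;q^r)_\infty}$.

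Finally I would invoke the identities $(q^2;q^2)_\infty\equiv(q;q)_\infty^2$ and $(q^{2r};q^{2r})_\infty\equiv(q^r;q^r)_\infty^2\pmod 2$, valid because $f(q)^2=f(q^2)$ in $\mathbb F_2[[q]]$; substituting these collapses the quotient to $(q^r;q^r)_\infty/(q;q)_\infty$, which is precisely the generating function for $q_r(\mathbb N;n)$ found above, and the theorem follows by comparing coefficients. I do not expect a genuine obstacle here: the one place to be careful is the bookkeeping that correctly identifies the three product sets (``odd'', ``odd and divisible by $r$'', and their even counterparts) so that the cancellation is exact, while the squaring identity in characteristic $2$ does the substantive work. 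If a bijective statement were preferred instead, one could look for a sign-reversing involution analogous to the map $\varphi$ in the combinatorial proof of Theorem \ref{rev}, but the generating-function route above is the most direct.
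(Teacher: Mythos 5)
Your proof is correct, and it takes a genuinely different route from the paper's. The paper first identifies $\sum q_r(\mathbb N;n)q^n=(q^r;q^r)_\infty/(q;q)_\infty$ via Subbarao/Glaisher, then introduces a two-variable generating function $F(z;q)$ for $p(\mathbb N-r\mathbb N;n,m)$ and evaluates at $z=-1$, using Euler's identity $(-q;q)_\infty^{-1}=(q;q^2)_\infty$ to establish the \emph{exact} signed identity $p_e(\mathbb N-r\mathbb N;n)-p_o(\mathbb N-r\mathbb N;n)=(-1)^n q(M-rM;n)$, from which the congruence follows by reducing modulo $2$. You instead work in $\mathbb F_2[[q]]$ from the outset: you write both sides as eta-quotient-like products, use $1+q^k\equiv 1-q^k$ to express the right-hand side as $(q;q)_\infty(q^{2r};q^{2r})_\infty/\bigl((q^2;q^2)_\infty(q^r;q^r)_\infty\bigr)$, and collapse it with the Frobenius identity $f(q)^2\equiv f(q^2)\pmod 2$. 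Your bookkeeping of the three product sets is exact (not merely a congruence), since for odd $r$ the odd multiples of $r$ are precisely $rm$ with $m$ odd, and all the series you divide by are units in $\mathbb F_2[[q]]$. What your approach buys is brevity and self-containment for the stated congruence --- no appeal to Glaisher's bijection or to the refined statistic tracking the number of parts; what the paper's approach buys is the stronger exact identity relating $p_e-p_o$ to $(-1)^nq(M-rM;n)$, which is in the spirit of the reversal results of that section and of which the theorem is only the mod-$2$ shadow. (The two arguments are secretly linked: the congruence $(-q;q)_\infty\equiv(q;q)_\infty\pmod 2$ you exploit is the mod-$2$ reduction of the Euler identity the paper uses.)
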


\begin{proof}
	From  Subbarao's theorem (also Glaisher's identity), we can write
	\begin{align*}
	\sum_{n=0}^\infty q_r(\mathbb{N};n)\,q^n = \sum_{n=0}^\infty p(\mathbb{N}-r\,\mathbb{N};n)\, q^n = \frac{(q^r;q^r)_\infty}{(q;q)_\infty}.
	\end{align*}
	On the other hand, elementary techniques in the theory of partitions give the following generating
	function
	\begin{align*}
	F(z;q) &:= \prod_{k=0}^\infty \frac{1}{(1-zq^{kr+1})(1-zq^{kr+2})\cdots (1-zq^{kr+r-1})} \\
	&= \sum_{m=0}^\infty \sum_{n=0}^\infty p(\mathbb{N}-r\,\mathbb{N};n,m)\,z^m\,q^n,
	\end{align*}
	where $p(\mathbb{N}-r\,\mathbb{N};n,m)$ is the number of partitions of $n$ with $m$ parts all taken from
	$\mathbb{N}-r\,\mathbb{N}$. 
	We can write
	\begin{align*}
	\sum_{n=0}^\infty (p_{e}(\mathbb{N}-r\,\mathbb{N};n)- p_{o}(\mathbb{N}& -r\,\mathbb{N};n))\,q^n   = F(-1;q).\end{align*}
 On the other hand, 
 \begin{align*}
 F(-1;q) & = \frac{(-q^r;q^r)_\infty}{(-q;q)_\infty} = \frac{(q;q^2)_\infty}{(q^r;q^{2r})_\infty} \\
	& = \frac{ (q;q^{2r})_\infty(q^3;q^{2r})_\infty\cdots (q^r;q^{2r})_\infty \cdots (q^{2r-1};q^{2r})_\infty}{(q^r;q^{2r})_\infty} \\
	& = \sum_{n=0}^\infty (-1)^n\,q(M-rM;n)\,q^n.
	\end{align*}
Thus we deduce that $p_{e}(\mathbb{N}-r\,\mathbb{N};n)- p_{o}(\mathbb{N}-r\,\mathbb{N};n)= (-1)^n\,q(M-rM;n)$ and the proof follows easily.
\end{proof}

An overpartition of $n$ is a partition of $n$ in which the first occurrence of a part may be overlined. 
 If $(S_1, S_2)$ is an Euler pair of order $r$, 
we denote by $\overline p_r(S_1, n)$ the number of overpartitions of $n$ with parts in $S_1$ and only parts  from $S_1\setminus S_2$ may be overlined. 

\begin{theorem}If $(S_1, S_2)$ is an Euler pair of order $r$, then $$
	q_r(S_1;n) \equiv  \overline p_r(S_1, n) \pmod 2,
	$$
\end{theorem}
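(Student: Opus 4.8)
The plan is to compare generating functions modulo $2$. Since $(S_1,S_2)$ is an Euler pair of order $r$, we have $rS_1\subseteq S_1$, $S_2=S_1\setminus rS_1$, and hence the disjoint decomposition $S_1=S_2\sqcup rS_1$; moreover Subbarao's theorem gives $q_r(S_1;n)=p(S_2;n)$. Thus the left-hand side has generating function
\[
\sum_{n\ge 0}q_r(S_1;n)\,q^n=\prod_{j\in S_2}\frac{1}{1-q^j}.
\]

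Next I would write down the generating function of the right-hand side. Because $S_1\setminus S_2=rS_1$, a part $j\in S_2$ can never be overlined and contributes the ordinary factor $\tfrac{1}{1-q^j}$, while a part $j\in rS_1$ may have its first occurrence overlined and so contributes the usual overpartition factor $\tfrac{1+q^j}{1-q^j}=1+2q^j+2q^{2j}+\cdots$. Consequently
\[
\sum_{n\ge 0}\overline p_r(S_1,n)\,q^n=\prod_{j\in S_2}\frac{1}{1-q^j}\,\prod_{j\in rS_1}\frac{1+q^j}{1-q^j}.
\]

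The key step is then the reduction modulo $2$: in $\mathbb F_2[[q]]$ one has $1+q^j=1-q^j$, so each factor $\tfrac{1+q^j}{1-q^j}$ becomes $1$, and the product over $j\in rS_1$ disappears, leaving exactly the generating function for $q_r(S_1;n)$. The one point that needs care — which I regard as the main (though modest) obstacle — is justifying the termwise reduction of the \emph{infinite} product $\prod_{j\in rS_1}\tfrac{1+q^j}{1-q^j}$. I would handle this by observing that each factor lies in $\mathbb Z[[q]]$ with all coefficients after the constant term even, hence is $\equiv 1\pmod 2$, and that reduction $\mathbb Z[[q]]\to\mathbb F_2[[q]]$ is a ring homomorphism continuous for the $q$-adic topology, under which a convergent product of series each $\equiv 1$ maps to $1$.

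Finally, I would note that a purely combinatorial proof is also available and perhaps more in keeping with Section~\ref{SL}. On the set of overpartitions counted by $\overline p_r(S_1,n)$ that contain at least one part from $rS_1$, toggling the overline on the first occurrence of the largest present part in $rS_1$ is a weight-preserving, fixed-point-free involution; such overpartitions therefore cancel in pairs modulo $2$. The survivors are precisely the ordinary partitions of $n$ with all parts in $S_2$, which number $p(S_2;n)=q_r(S_1;n)$, giving the congruence directly.
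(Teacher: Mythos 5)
Your generating-function argument is correct and is essentially the paper's proof: both rest on the decomposition $S_1=S_2\sqcup rS_1$ and on the observation that the factors $\frac{1+q^j}{1-q^j}$ (equivalently, $1+q^j$ versus $1-q^j$) are congruent to $1$ modulo $2$. The only cosmetic difference is that the paper detours through $q_r(S_1;n)=p(S_2;n)\equiv p_e(S_2;n)-p_o(S_2;n)\pmod 2$ and the product $\prod_{i\in S_2}\frac{1}{1+q^i}$ before arriving at the overpartition generating function, whereas you compare the two generating functions directly; your route is slightly more streamlined but not substantively different. Your closing combinatorial argument --- toggling the overline on the first occurrence of the largest part from $rS_1$ as a fixed-point-free, weight-preserving involution, leaving exactly the $p(S_2;n)$ ordinary partitions into parts of $S_2$ as survivors --- is a genuine addition not present in the paper, and it gives a bijective explanation of the parity statement that fits well with the combinatorial proof of Theorem~\ref{rev} given earlier in that section.
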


\begin{proof} Since $(S_1, S_2)$ is an Euler pair of order $r$, we have $q_r(S_1;n)=p(S_2;n)$. Moreover,  $rS_1\subseteq S_1$ and $S_2=S_1\setminus rS_1$. Thus $$q_r(S_1;n)=p(S_2;n)=p_e(S_2;n)+p_o(S_2;n)\equiv p_e(S_2;n)-p_o(S_2;n)\pmod 2.$$
We have 
\begin{align*}\sum_{n=0}^\infty (p_e(S_2;n)-p_o(S_2;n))q^n& = \prod_{i\in S_2}\frac{1}{1+q^i} = \prod_{i\in S_1\setminus rS_1}\frac{1}{1+q^i} = \prod_{i\in S_1}\frac{1+q^{ri}}{1+q^i}\\ & \equiv\prod_{i\in rS_1}(1+q^{i}) \prod_{i\in S_1}\frac{1}{1-q^i}  \pmod 2
\end{align*}
The first product generates the overlined parts and the second product generates the non-overlined parts. Since $S_2=S_1\setminus rS_1$, this concludes the proof. 
\end{proof}

\section{Concluding remarks}\label{S4}

In this article we found infinitely many new arithmetic progressions $\{s_{p,n}=p^2n+p\alpha-24_p^{-1}\}_{n\geqslant 0}$, where $p\in \mathcal P$ and $\alpha\neq \lfloor p/24\rfloor$ is a residue modulo $p$. For small $p\in \mathcal P$, the congruneces  could be proved using modular forms. However,  the general case required an intricate analysis of the Diophantine equation \begin{equation}\label{eqn_D} x^2+216y^2=pm,\end{equation} where $p\in \mathcal P$, $pm\equiv 1\pmod{24}$ and $p\nmid m$. The difficulty in this analysis  is caused by the fact that $216$ is not an idoneal or Euler convenient number. To prove Theorem \ref{Tmain}, we proved that the number of primitive solutions for \eqref{eqn_D} is divisible by $8$. 

Here, for $m\equiv 1\pmod{24}$, we conjecture an exact formula for $N_2(m)$, the number of primitive solutions  for \begin{equation}\label{eqn_D1} x^2+216y^2=m.\end{equation}

From \cite[Lemma 1.7]{Cox}, if there is a prime $p\mid m$ with  $p\equiv 13,17,19,23 \pmod{24}$, then $N_2(m)=0$.

Denote by $\mathcal Q$ be the set of prime numbers not in $\mathcal{P}$. 
Given a positive  integer $m$,   write  
$$m=p_1^{a_1}p_2^{a_2}\cdots p_k^{a_k}q_1^{b_1}q_2^{b_2}\cdots q_n^{b_n},$$
where $p_1,p_2,\ldots,p_k \in \mathcal P$  and  $q_1,q_2,\ldots,q_n \in \mathcal Q$.
Let  $$B(m):=|\{b_i\in \mathcal Q: b_i\equiv 0 \pmod 3\}|.$$

 \begin{conjecture}\label{solconjecture} Let $m$ be a positive integer such that $m\equiv 1\pmod{24}$  and all prime divisors of $m$ are congruent to $1, 5,7, 11$ modulo $24$. With the above notation we have 
\begin{equation*}
\begin{split}
N_2(m)=&2^{B(m)+k}\cdot \frac{2^{n-B(m)+1}+4\cdot(-1)^{n-B(m)}}{3}\\
=&\frac{2^{n+k+1}+2^{B(m)+k+2}\cdot(-1)^{n-B(m)}}{3}.
\end{split}
\end{equation*}
\end{conjecture}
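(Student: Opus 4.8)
The plan is to translate the counting problem into ideal theory for the imaginary quadratic order $\mathcal{O}$ of conductor $6$ in $K=\mathbb{Q}(\sqrt{-6})$, whose discriminant is $-864=-24\cdot 6^2$ and whose ring class number is $h(-864)=12$. Since $x^2+216y^2$ is the principal form of discriminant $-864$, the standard correspondence between representations by forms and proper ideals (\cf \cite{Cox}) gives
$$N_2(m)=2\cdot\#\{\mathfrak{a}\subseteq\mathcal{O}:\ \mathfrak{a}\ \text{primitive},\ N(\mathfrak{a})=m,\ [\mathfrak{a}]=1\ \text{in}\ C(\mathcal{O})\},$$
the factor $2$ being the number of units $\pm1$. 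Every prime $p\mid m$ satisfies $p\equiv 1,5,7,11\pmod{24}$, hence $p\nmid 6$ and $\left(\frac{-6}{p}\right)=1$, so $p$ is prime to the conductor and splits in $\mathcal{O}$ as $\mathfrak{p}\bar{\mathfrak{p}}$. A primitive ideal of norm $m=\prod_i p_i^{a_i}\prod_j q_j^{b_j}$ is then obtained by choosing, for each rational prime divisor, one of the two primes above it and raising it to the full exponent; there are exactly $2^{k+n}$ such ideals, and the whole problem reduces to deciding which sign choices yield a principal ideal.

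The structural input I would establish first is that $C(\mathcal{O})\cong\mathbb{Z}/2\mathbb{Z}\times\mathbb{Z}/6\mathbb{Z}$; equivalently, that there are four genera (each of size $3$) with principal genus $C(\mathcal{O})^2\cong\mathbb{Z}/3\mathbb{Z}$. The four genera are detected exactly as in the idoneal case discussed in the introduction: a number coprime to $6$ primitively represented by a form of discriminant $-864$ must satisfy $\left(\frac{-6}{\cdot}\right)=1$, i.e.\ lie in one of the classes $1,5,7,11\pmod{24}$, and these four residues are separated by the genus characters modulo $24$. This is precisely where the non-idoneality of $216$ enters: the principal genus is nontrivial (order $3$) rather than trivial. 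Because $9\nmid 12$, the Sylow $3$-subgroup is $\mathbb{Z}/3\mathbb{Z}$ and splits off as a direct factor, giving a surjection $\psi\colon C(\mathcal{O})\to\mathbb{Z}/3\mathbb{Z}$; and since $C(\mathcal{O})$ has exponent $6$ (no elements of order $4$), a class has order $>2$ if and only if $\psi$ of it is nonzero. Using Proposition \ref{Pp^2} (namely $p\in\mathcal P$ iff $x^2+216y^2=p^2$ has a primitive solution, i.e.\ iff $[\mathfrak p]^2=1$), I would record the dictionary: for a split prime dividing $m$, $p\in\mathcal P$ iff $[\mathfrak p]$ has order dividing $2$ iff $\psi([\mathfrak p])=0$, whereas $q\in\mathcal Q$ iff $\psi([\mathfrak q])\neq 0$.

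With this in hand the count is a roots-of-unity filter. Writing a primitive ideal of norm $m$ by its class $\prod_i[\mathfrak p_i]^{\varepsilon_i a_i}\prod_j[\mathfrak q_j]^{\eta_j b_j}$ with $\varepsilon_i,\eta_j\in\{\pm1\}$ (the sign recording the choice of $\mathfrak p_i$ versus $\bar{\mathfrak p}_i$, since $[\bar{\mathfrak p}]=[\mathfrak p]^{-1}$), the genus of the product equals the genus of $m$, which is principal because $m\equiv 1\pmod{24}$; thus the genus condition is automatically and consistently satisfied and imposes no constraint on the signs. Hence the ideal is principal iff
$$\sum_i \varepsilon_i a_i\,\psi([\mathfrak p_i])+\sum_j\eta_j b_j\,\psi([\mathfrak q_j])\equiv 0\pmod 3.$$
Every $\mathcal P$-prime drops out since $\psi([\mathfrak p_i])=0$, so its sign is free and contributes a factor $2^{k}$. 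For a $\mathcal Q$-prime, $\psi([\mathfrak q_j])=\pm1$; if $3\mid b_j$ the term vanishes for either sign (free choice, factor $2^{B(m)}$), while each of the remaining $n-B(m)$ primes contributes a term lying in $\{1,2\}\bmod 3$, each value attained by exactly one sign. Counting sign vectors with sum in $3\mathbb{Z}$ via $\tfrac13\sum_{t=0}^{2}(\omega^{t}+\omega^{2t})^{\,n-B(m)}$ with $\omega=e^{2\pi i/3}$ gives $\tfrac{2^{\,n-B(m)}+2(-1)^{\,n-B(m)}}{3}$, so that
$$N_2(m)=2\cdot 2^{k}\cdot 2^{B(m)}\cdot\frac{2^{\,n-B(m)}+2(-1)^{\,n-B(m)}}{3}=2^{B(m)+k}\cdot\frac{2^{\,n-B(m)+1}+4(-1)^{\,n-B(m)}}{3},$$
as claimed. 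The hard part will be the two inputs that are not formal: first, rigorously pinning down $C(\mathcal{O})\cong\mathbb{Z}/2\mathbb{Z}\times\mathbb{Z}/6\mathbb{Z}$ (four genera, cyclic $3$-part), since this is the non-idoneal phenomenon itself and must be argued through genus theory for the order of conductor $6$ rather than quoted from an idoneality table; and second, making the ideal–form correspondence precise for this non-maximal order, in particular checking that \emph{primitive representation} matches \emph{primitive proper $\mathcal{O}$-ideal} and that the primes at $2$ and $3$ cause no trouble, which they do not since $\gcd(m,6)=1$. Everything after these two points is the elementary filter above.
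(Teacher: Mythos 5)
This statement is left unproved in the paper --- it appears only as Conjecture~\ref{solconjecture}, supported by numerical evidence --- so there is no proof of record to compare against. Judged on its own, your proposal is correct and, modulo the two routine verifications you yourself flag, it settles the conjecture. The key structural facts all check out: since $216\equiv 0\pmod 8$ has a single odd prime divisor, the number of assigned characters is $\mu=3$, so there are $2^{\mu-1}=4$ genera of discriminant $-864$; with $h(-864)=12$ this forces $C(\mathcal{O})/C(\mathcal{O})^2\cong(\mathbb{Z}/2\mathbb{Z})^2$ and hence $C(\mathcal{O})\cong(\mathbb{Z}/2\mathbb{Z})^2\times\mathbb{Z}/3\mathbb{Z}$, which rules out elements of order $4$ and gives your splitting $\psi$. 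The three characters $\left(\frac{\cdot}{3}\right),\delta,\epsilon$ take distinct values on $1,5,7,11\pmod{24}$, so the genus of any primitive ideal of norm $m$ is the genus of $m\bmod 24$, i.e.\ principal --- this is exactly why the sign choices are unconstrained on the $2$-torsion side. Proposition~\ref{Pp^2} supplies the dictionary $p\in\mathcal{P}\iff[\mathfrak{p}]^2=1\iff\psi([\mathfrak{p}])=0$, and the roots-of-unity filter then yields $\tfrac{1}{3}\bigl(2^{n-B(m)}+2(-1)^{n-B(m)}\bigr)$ admissible sign vectors, reproducing the conjectured formula. I verified the formula against the direct ideal count in several degenerate cases ($m=1$; $m=p$ with $p\in\mathcal{P}_1$, giving $4$ as in the paper; $m=q$ or $q^2$ with $q\notin\mathcal{P}$, giving $0$; $m=q_1q_2$ and $m=q_1^3q_2^3$ with $q_i\equiv 5\pmod{24}$), and it agrees each time.

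Two remarks on how this sits relative to what the paper does prove. First, the paper's Lemma~\ref{Lmain} and Corollary~\ref{Cmain} (the $2$-to-$1$ map built from a fixed primitive representation of $jp$, giving $8\mid N_2(pm)$) are exactly the composition law in $C(\mathcal{O})$ made explicit one prime at a time; your argument subsumes and completes that computation, and in particular explains the role of $j\in\{1,4,8\}$ as bookkeeping for the $2$-part of $[\mathfrak{p}]$. Second, the place where you should spend the most care in a write-up is the form--ideal dictionary for the non-maximal order of conductor $6$: you need that primitive representations $x^2+216y^2=m$ with $\gcd(x,y)=1$ correspond two-to-one (via $\pm$) to primitive principal proper $\mathcal{O}$-ideals of norm $m$, and that every primitive ideal of norm $m$ factors as a choice of $\mathfrak{p}_i^{a_i}$ versus $\bar{\mathfrak{p}}_i^{a_i}$ for each prime. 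Both are standard because $\gcd(m,6)=1$ puts everything prime to the conductor, where $\mathcal{O}$-ideals biject with $\mathcal{O}_K$-ideals and unique factorization applies; once stated precisely, nothing in the argument is conjectural.
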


We note that Conjecture \ref{solconjecture} implies Theorem \ref{Th1}. \smallskip

As noted in Remark \ref{RSchur}, it follows from  Corollary \ref{cor-rev} that $$b_3(n)\equiv q(S_2;n)\pmod 2,$$ where $q(S_2;n)$ is the number of distinct $3$-regular partitions with all parts odd.
Then the congruences of Theorems \ref{Th1}, \ref{Th2}, and \ref{Tmain} hold for $q(S_2;n)$.

The results of section \ref{SL} led us to search for parity results for $b_{3,e}(n)$ (respectively $b_{3,e}(n)$), the number of $3$-regular partitions with an even  (respectively odd) number of parts.

We end the article with the following congruence conjectures for which there is substantial numerical evidence.

The first conjecture is similar to Theorem \ref{Th1}. 
\begin{conjecture} \label{conj 4}
	For $n\geqslant 0$ and $p\equiv 13,17,19,23 \pmod {24}$,
	$$b_{3,e}\big(2\,(p^2\, n+p\alpha-24^{-1})\big) \equiv 0 \pmod 2,$$
	$$b_{3,o}\big(2\,(p^2\, n+p\alpha-24^{-1})\big) \equiv 0 \pmod 2,$$
	where $1\leqslant \alpha \leqslant p-1$ and $24^{-1}$ is taken modulo $p^2$. 
\end{conjecture}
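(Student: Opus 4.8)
The plan is to reduce the two asserted congruences to a single finer one and then to seek a theta (quadratic form) interpretation for it. The starting point is the identity furnished by Corollary \ref{cor-rev} and Remark \ref{RSchur}: for the Schur pair $S_1=\{n\equiv\pm1\ (3)\}$, $S_2=\{n\equiv\pm1\ (6)\}$ one has $b_{3,e}(n)-b_{3,o}(n)=(-1)^n q(S_2;n)$, whose generating function is the eta-quotient
$$\sum_{n\geqslant0}(-1)^n q(S_2;n)\,q^n=\prod_{\gcd(m,6)=1}(1-q^m)=\frac{(q;q)_\infty (q^6;q^6)_\infty}{(q^2;q^2)_\infty (q^3;q^3)_\infty}=:G(q).$$
Writing $B(q):=\sum_{n\geqslant 0}b_3(n)q^n=\frac{(q^3;q^3)_\infty}{(q;q)_\infty}$, we have $2\sum_n b_{3,o}(n)q^n=B(q)-G(q)$. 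Since $p\equiv13,17,19,23\pmod{24}$, Theorem \ref{Th1} already gives $b_3(2s)\equiv 0\pmod 2$ on the progression $s=p^2n+p\alpha-24^{-1}$, $1\leqslant\alpha\leqslant p-1$; because $b_{3,e}=b_3-b_{3,o}$, it therefore suffices to prove the single congruence $b_{3,o}(2s)\equiv 0\pmod 2$.

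Next I would make the reduction quantitative. As $G\equiv B\pmod 2$ (use $(q^2;q^2)_\infty\equiv(q;q)_\infty^2$ and $1+q^k\equiv1-q^k$ modulo $2$), both $b_3(2s)$ and $q(S_2;2s)$ are even on the progression, so $b_{3,o}(2s)=\frac12\big(b_3(2s)-q(S_2;2s)\big)$ and the conjecture becomes the \emph{modulo $4$} statement
$$b_3(2s)\equiv q(S_2;2s)\pmod 4,\qquad s=p^2n+p\alpha-24^{-1}.$$
The aim is to describe the residue $b_{3,o}(2n)\bmod 2$ --- equivalently the coefficients of $(B-G)/2$ on even powers, reduced modulo $2$ --- as a count of representations of $24\cdot 2s+1$ (an integer $pm$ with $p\nmid m$) by the quadratic forms of discriminant $-96$ that already govern $a(n)$ in the Keith--Zanello analysis. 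Concretely, I would compute the $2$-dissection of $B-G$ and reduce it modulo $4$ using the $2$-adic refinements of $f(q)^2\equiv f(q^2)\pmod2$, hoping to recognise the even part as $2$ times an eta-quotient whose mod-$2$ reduction is a theta series supported on values $x^2+24y^2$ and its companions of discriminant $-96$.

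With such a theta description in place, the vanishing step is immediate and mirrors Remark \ref{R1}: the congruence class $p\equiv13,17,19,23\pmod{24}$ is exactly the condition $\big(\tfrac{-6}{p}\big)=-1$, so $p$ is inert in $\mathbb Q(\sqrt{-6})$ and none of the forms of discriminant $-96$ represents $pm$ when $p\nmid m$; hence the relevant coefficient vanishes and $b_{3,o}(2s)\equiv0\pmod 2$, which forces $b_{3,e}(2s)\equiv0\pmod2$ as well.

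The hard part is the middle step, and it is presumably why the statement is only conjectured. Modulo $2$ the eta-quotients $B$ and $G$ collapse to one and the same Keith--Zanello theta series, so the parity distinction between $b_{3,e}$ and $b_{3,o}$ is invisible at the mod-$2$ level: it lives one layer deeper, in $B-G$ modulo $4$. At that depth the multiplicativity underlying the mass formula of \cite{Cox} no longer applies directly, and $(B-G)/2$ need not be the reduction of a single eta-quotient, so neither the quadratic-form bookkeeping of Subsection \ref{2.2} nor the Radu bound of Lemma \ref{radu} (which requires a genuine eta-quotient) can be invoked verbatim. A complete proof would need either an explicit mod-$4$ eta-quotient identity for $(B-G)/2$ or a genuinely new weighted-representation interpretation of $b_{3,o}(2n)\bmod 2$. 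Pending that, for each fixed small prime $p$ the congruences can be checked mechanically, and whenever $(B\pm G)/2$ does admit a mod-$2$ eta-quotient representative one can apply Lemma \ref{radu} exactly as in Subsection \ref{2.1}; this is the source of the numerical evidence supporting the conjecture.
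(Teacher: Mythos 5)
The statement you are addressing is Conjecture~\ref{conj 4}, which the paper does \emph{not} prove: it is offered in Section~\ref{S4} only with numerical evidence and a suggested line of attack. So the honest answer is that your proposal contains a genuine gap --- but it is the same gap the authors leave open, and you have located it correctly. The part of your argument that is actually carried out is sound: from Corollary~\ref{cor-rev} and Remark~\ref{RSchur} you get $b_{3,e}(n)-b_{3,o}(n)=(-1)^n q(S_2;n)$; Theorem~\ref{Th1} kills $b_3(2s)$ mod $2$ on the progression, so only $b_{3,o}(2s)\equiv 0\pmod 2$ needs proof; and since $G\equiv B\pmod 2$ (your eta-quotient manipulation checks out), the conjecture is equivalent to
\begin{equation*}
b_3(2s)\equiv q(S_2;2s)\pmod 4,\qquad s=p^2n+p\alpha-24^{-1}.
\end{equation*}
This is precisely the reduction the paper itself proposes in the concluding remarks (there phrased as $2b_{3,e}(m)=b_3(m)+q(S_2;m)$, hence it suffices to show $b_3(m)\equiv q(S_2;m)\pmod 4$). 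Up to this point you and the authors are in complete agreement.

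The gap is everything after that. Your plan to $2$-dissect $B-G$, recognise $(B-G)/2$ modulo $2$ as a theta series of discriminant $-96$, and then invoke inertness of $p$ in $\mathbb{Q}(\sqrt{-6})$ is a plausible strategy but is nowhere executed: no dissection is computed, no eta-quotient or weighted-representation identity for $(B-G)/2$ is exhibited, and as you yourself observe, the mod-$2$ collapse $G\equiv B$ means the object you need lives strictly below the level at which the mass-formula and genus arguments of Subsection~\ref{2.2} (or the Keith--Zanello analysis) operate. You are also right that Lemma~\ref{radu} cannot be applied as stated, since $(B-G)/2$ is not an eta-quotient. So the proposal should not be read as a proof; it is a correct restatement of the open problem together with a reasonable but unverified plan. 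One small addition worth knowing: the paper also suggests an alternative route via $b_{3,e}(m)=p(S_3;m)$ with $S_3=\{n\not\equiv 0,\pm 1,\pm 10,\pm 11,12 \pmod{24}\}$ from \cite{M22}, which would let one attack $b_{3,e}$ directly as an honest partition function rather than through the difference $B-G$; your proposal does not consider this option.
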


The next conjecture is similar to Theorem \ref{Tmain}.
\begin{conjecture} \label{C1.4}
	For $n\geqslant 0$, and $p\in\mathcal{P}$ with $p\equiv 1,7 \pmod{24}$,
	$$b_{3,e}\big(2\,(p^2\,n+p\,\alpha-24_p^{-1})\big) \equiv 0 \pmod 2,$$
	$$b_{3,o}\big(2\,(p^2\,n+p\,\alpha-24_p^{-1})\big) \equiv 0 \pmod 2,$$
	where $0\leqslant \alpha < p$, $\alpha \neq \lfloor p/24 \rfloor$.  
\end{conjecture}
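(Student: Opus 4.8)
The plan is to reduce Conjecture~\ref{C1.4} to a single congruence modulo $4$ for $b_3$ and then to attack that congruence by the two methods of Section~\ref{S2}. First I would record the generating function for the difference $b_{3,e}-b_{3,o}$. In the two-variable series $\prod_{3\nmid m}(1-zq^m)^{-1}=\sum_{n,k}c(n,k)z^kq^n$, where $c(n,k)$ is the number of $3$-regular partitions of $n$ into $k$ parts, setting $z=-1$ gives
$$\sum_{n\geqslant 0}\bigl(b_{3,e}(n)-b_{3,o}(n)\bigr)q^n=\prod_{\substack{m\geqslant 1\\ 3\nmid m}}\frac{1}{1+q^m}=\frac{E_1E_6}{E_2E_3}=\prod_{m\equiv \pm 1\,(6)}(1-q^m),$$
where $E_d:=(q^d;q^d)_\infty$. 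By Corollary~\ref{cor-rev} and Remark~\ref{RSchur} the right-hand side is exactly $\sum_n(-1)^n q(S_2;n)q^n$, so on the \emph{even} arguments of the conjecture, with $N:=p^2n+p\alpha-24_p^{-1}$, we obtain the clean identity $b_{3,e}(2N)-b_{3,o}(2N)=q(S_2;2N)$.

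Writing $b_{3,o}(2N)=\tfrac12\bigl(b_3(2N)-q(S_2;2N)\bigr)$ and $b_{3,e}(2N)=b_3(2N)-b_{3,o}(2N)$, Theorem~\ref{Tmain} gives $b_3(2N)\equiv 0\pmod 2$, and Remark~\ref{RSchur} (i.e.\ $b_3\equiv q(S_2)\pmod 2$) forces $q(S_2;2N)\equiv 0\pmod 2$; hence the half-sum is an integer. Since $b_3(2N)$ is already even, it \emph{suffices} to prove that $b_{3,o}(2N)$ is even, for then $b_{3,e}(2N)=b_3(2N)-b_{3,o}(2N)$ is even too. Because $b_3(2N)-q(S_2;2N)=2\,b_{3,o}(2N)$, the whole of Conjecture~\ref{C1.4} is therefore equivalent to the single modulo-$4$ refinement
$$b_3\bigl(2N\bigr)\equiv q\bigl(S_2;2N\bigr)\pmod 4,$$
one level above Theorem~\ref{Tmain}.

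To establish this I would mirror Section~\ref{S2}. For the small primes (the analogue of $\mathcal U$) I would use modular forms modulo $4$: both $A:=E_3/E_1=\sum b_3(n)q^n$ and $D:=E_1E_6/(E_2E_3)=\sum(-1)^n q(S_2;n)q^n$ are weight-$0$ eta-quotients, so $A-D=2\sum_n b_{3,o}(n)q^n$ is a modular function; after multiplying by a suitable power of $\eta$ to clear poles and applying the $U$-operator that isolates the progression, Sturm's bound reduces the required congruence $[q^{2N}](A-D)\equiv 0\pmod 4$ to a finite coefficient check, exactly as Lemma~\ref{radu} does modulo $2$ in Subsection~\ref{2.1}. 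For all $p\in\mathcal P$ with $p\equiv 1,7\pmod{24}$ I would instead extend the quadratic-form argument of Subsection~\ref{2.2}. The first task there is a Keith-type reduction one order finer than the one in \cite{Keith}: a formula for $b_{3,o}(2n)$ modulo $2$ (equivalently for $b_3(2n)-q(S_2;2n)$ modulo $4$) as a representation count, obtained by $2$-dissecting $\tfrac12(A-D)$ via Jacobi-triple-product manipulations. Granting such a formula, one would run the $x^2+216y^2=pm$ machinery of Lemmas~\ref{L_j=1} and~\ref{L_j=4} and of Lemma~\ref{Lmain}/Corollary~\ref{Cmain}, now needing divisibility by a \emph{higher} power of $2$ of a suitably $2$-adically weighted primitive-solution count. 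The restriction to $p\equiv 1,7$ ($j\in\{1,4\}$) is precisely where the $2$-adic shape of primitive solutions is uniform ($x$, respectively $x/2$, odd), whereas for $j=8$ Lemma~\ref{L_j=8} splits solutions into two classes by $\val_2(x)$, which is presumably why the primes $p\equiv 5,11$ are excluded.

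The main obstacle is exactly the passage from modulo $2$ to modulo $4$. All of the tools behind Theorem~\ref{Tmain}—the mass formula, the Jacobi-symbol product in \eqref{N}, and the two-to-one map of Lemma~\ref{Lmain}—are insensitive to the sign $(-1)^n=(-1)^{(pm-1)/24}$ that separates $q(S_2;2N)$ from $b_3(2N)$: they control the total representation number but not the finer signed count whose vanishing modulo $4$ we now need. The hard part will be to build a sign-weighted analogue of the pairing of $X_{p,m}$ in Lemma~\ref{Lmain} whose orbits have size divisible by $16$ rather than $8$, uniformly in $p$, with no closed formula available because $216$ is not idoneal. I expect that this weighted divisibility—equivalently, a genuine understanding of $b_3(2N)\bmod 4$ through the non-idoneal form $x^2+216y^2$—is the crux on which a full proof of Conjecture~\ref{C1.4} depends.
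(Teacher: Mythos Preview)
The statement you are attempting is a \emph{conjecture}: the paper does not prove it. What the paper does offer, immediately after stating Conjecture~\ref{C1.4}, is precisely the reduction you derive: from Corollary~\ref{cor-rev} one has $2b_{3,e}(m)=b_3(m)+q(S_2;m)$ (equivalently $2b_{3,o}(m)=b_3(m)-q(S_2;m)$), so that, given Theorem~\ref{Tmain}, both congruences of the conjecture are equivalent to the single refinement
\[
b_3\bigl(2(p^2n+p\alpha-24_p^{-1})\bigr)\equiv q\bigl(S_2;\,2(p^2n+p\alpha-24_p^{-1})\bigr)\pmod 4.
\]
Your generating-function computation $\sum_n(b_{3,e}(n)-b_{3,o}(n))q^n=E_1E_6/(E_2E_3)=\prod_{m\equiv\pm1(6)}(1-q^m)$ and the ensuing reduction are correct and coincide with the paper's own suggestion.

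Beyond that reduction, however, what you present is a strategy, not a proof. The modular-forms leg (``apply Sturm modulo $4$'') would indeed verify finitely many $p$, but establishes nothing for general $p\in\mathcal P$. For the quadratic-form leg you rely on two unproven ingredients: (i) a Keith--Zanello-type identity one $2$-adic level finer, expressing $b_{3,o}(2n)\bmod 2$ (equivalently $b_3(2n)-q(S_2;2n)\bmod 4$) as a signed representation count; and (ii) a strengthening of Lemma~\ref{Lmain}/Corollary~\ref{Cmain} from $8\mid N_2(pm)$ to a divisibility by $16$ of some weighted count. Neither exists in the paper, and you do not supply them; you yourself flag (ii) as ``the crux''. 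Your heuristic for why $p\equiv 5,11\pmod{24}$ are excluded (the split in $\val_2(x)$ from Lemma~\ref{L_j=8}) is plausible but speculative. In short: your reduction reproduces exactly the approach the paper proposes, and your diagnosis of the obstruction is reasonable, but the proposal does not close the gap---which is consistent with the statement remaining a conjecture.
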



To prove Conjectures \ref{conj 4} and \ref{C1.4}, one could try to make use of 
 Corollary \ref{cor-rev} which gives  $$2b_{3,e}(m)=b_3(m)+q(S_2;m).$$ 
Thus, to prove the conjectures, it is enough to show that $$b_3(m)\equiv q(S_2;m) \pmod 4$$ for the suitable values of $m$.

A different approach would be to use \cite[Theorem 1.4]{M22}, which states that \begin{align*}b_{3,e}(m)& =p(S_3;m),\end{align*} where
\begin{align*}S_3& = \{ n \in \mathbb N\mid n\not \equiv 0, \pm 1, \pm 10, \pm 11, 12 \mod{24}\}, \end{align*} and try to prove the conjectures for the function $p(S_3;m)$.  

It is likely that similar conjectures can be formulated as companions to Theorem \ref{Th1}.
For example, there is considerable numerical evidence for the next conjecture which fits into the case $k=0$ of Theorem \ref{Th1} (2). 
\begin{conjecture}\label{conj 6}
	For $n\geqslant 0$, and $p\in\{7,31\}$,
	$$b_{3,e}\big(2\,(p^3\, n+p^2\alpha-24^{-1})\big) \equiv 0 \pmod 2;$$
	$$b_{3,o}\big(2\,(p^3\, n+p^2\alpha-24^{-1})\big) \equiv 0 \pmod 2;$$
	where $0\leqslant \alpha < p$, $\alpha \not\equiv -24^{-1} \pmod p$, and $24^{-1}$ is taken modulo $p^2$.
\end{conjecture}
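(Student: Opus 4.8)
The plan is to prove the two congruences together by reducing them, for each fixed value $s=p^3n+p^2\alpha-24^{-1}$ of the inner argument, to a pair of congruences for $b_3$ alone, and then to attack those by the modular forms machinery of subsection \ref{2.1}. Writing $m=2s$, Corollary \ref{cor-rev} together with Remark \ref{RSchur} gives, since $m$ is even,
$$2b_{3,e}(m)=b_3(m)+q(S_2;m),\qquad 2b_{3,o}(m)=b_3(m)-q(S_2;m),$$
where $q(S_2;m)$ counts the distinct partitions of $m$ into parts $\equiv\pm1\pmod 6$. First I would observe that the two asserted congruences $b_{3,e}(m)\equiv b_{3,o}(m)\equiv 0\pmod 2$ are together equivalent, via $b_3=b_{3,e}+b_{3,o}$, to the single pair
$$b_3(m)\equiv 0\pmod 2\qquad\text{and}\qquad b_3(m)\equiv q(S_2;m)\pmod 4.$$
Indeed $b_3(m)\equiv q(S_2;m)\pmod 2$ holds for every $m$ by Remark \ref{RSchur}, so the real content is the mod-$4$ refinement, exactly as the authors note for Conjectures \ref{conj 4} and \ref{C1.4}.

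The first congruence $b_3(m)\equiv 0\pmod 2$ on the progression $m=2(p^3n+p^2\alpha-24^{-1})$ I would establish as in subsection \ref{2.1}: since $p\in\{7,31\}$ satisfies $b_3\big(\tfrac{p^2-1}{12}\big)\equiv 0\pmod 2$, these primes fall under Theorem \ref{Th2}(2), and the refinement to modulus $p^3$ and to all admissible $\alpha$ is a finite check produced by Lemma \ref{radu} applied to the eta-quotient $\prod_{n\ge1}(1-q^n)^4(1-q^{3n})^{-1}$ with $u=2$, $m=p^3$, and $t$ ranging over the relevant residues. For the mod-$4$ congruence I would work with the two genuine eta-quotients
$$\sum_{n\ge0}b_3(n)\,q^n=\frac{(q^3;q^3)_\infty}{(q;q)_\infty},\qquad \sum_{n\ge0}q(S_2;n)\,q^n=\prod_{\gcd(n,6)=1}(1+q^n)=\frac{(q^2;q^2)_\infty^2\,(q^3;q^3)_\infty\,(q^{12};q^{12})_\infty}{(q;q)_\infty\,(q^4;q^4)_\infty\,(q^6;q^6)_\infty^2},$$
extract even parts, form the single $q$-series for $b_3(2n)-q(S_2;2n)$, realize it as the coefficients of a holomorphic modular form on some $\Gamma_0(N)$ with $p\mid N$ and $24\mid N$, sieve onto the progression $s\equiv p^2\alpha-24^{-1}\pmod{p^3}$, and invoke a Sturm-type bound modulo $4$ to reduce to finitely many coefficients. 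Because $\{7,31\}$ is finite, this is a concrete (if large) computation, directly parallel to subsection \ref{2.1} but with $u=4$ and $m=p^3$.

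The hard part will be the mod-$4$ step, for three reasons. Conceptually, $b_{3,e}$ and $b_{3,o}$ are the half-sum and half-difference $\tfrac12\big(b_3\pm q(S_2)\big)$, hence are \emph{not} the coefficients of a single eta-quotient, so Lemma \ref{radu} does not apply verbatim; one must instead work in the two-dimensional span of the eta-quotients above and control its coefficients modulo $4$ rather than modulo $2$, which forces the full modular-form version of the argument rather than the mod-$2$ shortcut used for Theorem \ref{Tmain}. Arithmetically, $b_3(m)\equiv q(S_2;m)\pmod 4$ is a $2$-adic refinement, pushing to the second $2$-adic digit of the quadratic-form count for $b_3$ developed in subsection \ref{2.2}, and that digit is not governed by the genus/mass formula that sufficed there. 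Practically, the Sturm bound grows like $p^3\,[\Gamma:\Gamma_0(N)]$, so for $p=31$ (where $p^3=29791$) the finite verification is computationally heavy, the same bottleneck flagged in subsection \ref{2.1}. A cleaner alternative I would pursue in parallel is to seek \cite{M22}-type product identities expressing $b_{3,e}$ and $b_{3,o}$ individually as honest single partition counts whose generating functions are, modulo $2$, single eta-quotients; such identities would place the problem back inside the scope of Lemma \ref{radu} and bypass the mod-$4$ difficulty altogether.
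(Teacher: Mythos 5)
The statement you have attempted is Conjecture \ref{conj 6}; the paper does not prove it, but offers it in section \ref{S4} with numerical evidence only, so there is no proof of record to compare yours against. Your reduction is sound and in fact coincides with the route the authors themselves sketch for the companion Conjectures \ref{conj 4} and \ref{C1.4}: from $b_3=b_{3,e}+b_{3,o}$ and the even-argument case of Corollary \ref{cor-rev} (via Remark \ref{RSchur}) one gets $2b_{3,e}(m)=b_3(m)+q(S_2;m)$ and $2b_{3,o}(m)=b_3(m)-q(S_2;m)$, and your equivalence of the two asserted congruences with the pair $b_3(m)\equiv 0\pmod 2$ and $b_3(m)\equiv q(S_2;m)\pmod 4$ is correct. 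The mod-$2$ half is indeed already a theorem: since $b_3\bigl(\frac{p^2-1}{12}\bigr)\equiv 0\pmod 2$ for $p\in\{7,31\}$, Theorem \ref{Th2}(2) with $k=0$ applied to $N=pn+\alpha$ gives the vanishing of $b_3$ modulo $2$ on the stated progression precisely under the hypothesis $\alpha\not\equiv-24^{-1}\pmod p$, which is the condition $p\nmid(24N+1)$; no additional finite check via Lemma \ref{radu} is needed for that half, contrary to what you suggest.

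The genuine gap is the mod-$4$ congruence $b_3(m)\equiv q(S_2;m)\pmod 4$, which is exactly the open content of the conjecture, and your proposal only names a program for it rather than executing any part of it. The obstacles you list are real and unresolved in your write-up: the sequence $b_3(2n)-q(S_2;2n)$ is not visibly the coefficient sequence of a single eta-quotient, so Lemma \ref{radu} does not apply; one would have to exhibit an explicit holomorphic modular form on a specific $\Gamma_0(N)$ whose coefficients realize this difference on the sieved progression, justify a Sturm-type bound modulo $4$ for it, and then actually perform the verification up to that bound (for $p=31$ this involves coefficients out to roughly $p^3$ times the index of $\Gamma_0(N)$, beyond even the computation the authors describe as a bottleneck in subsection \ref{2.1}). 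None of the form construction, the bound, or the computation is supplied, and the alternative you float --- single-product identities for $b_{3,e}$ and $b_{3,o}$ individually --- is likewise only a wish. As it stands your text is a correct restatement of the problem plus a plausible but unexecuted strategy, so the statement remains a conjecture.
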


\end{document}